\def \Sprime {Q}
\def \bR {\mathbb{R}}
\def \R {\mathbb{R}}
\def \conv {\text{conv}}
\def \cS {\mathcal{S}}
\def \bcS {\bar{\mathcal{S}}}
\def \XJDd {X^{l,u}}
\def \DJDd {\Delta^{l,u}}
\def \bXJD {\bar{X}^{0,u}}
\def \bXJDd {\bar{X}^{l,u}}
\def \bXJd {\bar{X}^{l,n}}
\def \bDJD {\bar{\Delta}^{0,u}}
\def \bDJDd {\bar{\Delta}^{l,u}}
\def \bDJd {\bar{\Delta}^{l,n}}
\def \goodfamily {proper} 
\newcommand{\keywords}[1]{\textbf{\textit{Keywords---}} #1}
\DeclareMathOperator{\proj}{proj}
\newtheorem{thm}{Theorem}
\newtheorem{lem}[thm]{Lemma}
\newtheorem{prop}[thm]{Proposition}
\newtheorem{rmk}[thm]{Remark}
\newtheorem{defn}[thm]{Definition}
\title{Multilinear Sets with Two Monomials and Cardinality Constraints}
\author{Rui Chen \\ University of Wisconsin-Madison \\[-.2cm]\small (rchen234@wisc.edu) \and 
	Sanjeeb Dash\\ IBM Research \\[-.2cm]\small (sanjeebd@us.ibm.com) \and
	Oktay G\"{u}nl\"{u}k \\ Cornell University \\[-.2cm] \small (ong5@cornell.edu)}
\date{\today}
\begin{document}
\maketitle\allowdisplaybreaks
\begin{abstract}
  Binary polynomial optimization is equivalent to the problem of minimizing a linear function over the intersection of the multilinear set with a polyhedron.
  Many families of valid inequalities for the multilinear set are available in the literature, though giving a polyhedral characterization of the convex hull is not tractable in general as binary polynomial optimization is NP-hard.
  In this paper, we study the cardinality constrained multilinear set in the special case when the number of monomials is exactly two. We give an extended formulation, with two more auxiliary variables and exponentially many inequalities, of the convex hull of solutions of the standard linearization of this problem. We also show that the separation problem can be solved efficiently. 
\end{abstract}
\keywords{binary polynomial optimization, cardinality constraint, polyhedral combinatorics}

\section{Introduction}
Binary polynomial optimization problems, which consist of minimizing a polynomial objective function of binary variables subject to polynomial constraints, are often formulated as integer programming problems, and solved using standard off-the-shelf MIP solvers.
Consider a polynomial function of a vector $x \in \R^n$ given by
\begin{displaymath}
f(x)=\beta+\sum_{i=1}^m \gamma_i\prod_{j\in S_i} x_j^{\alpha_{ij}},
\end{displaymath}
where $S_i \subseteq \{1, \ldots, n\}$ for $i=1,\ldots, m$, $\alpha_{ij}$ is a positive integer for $j \in S_i$, $\beta\in\R$, and $\gamma\in\R^m$.
If $x_j\in\{0,1\}$, then $x^k_j=x_j$ for any integer $k\ge1$. Therefore, $f(x)$ is equal to the multilinear function $\beta+ \sum_{i=1}^m\gamma_i \prod_{j \in S_i} x_j$ (no variable appears with an exponent greater than 1) for all $x \in \{0,1\}^n$.
One can then replace each product $\prod_{j\in S_i} x_j$ with a new variable $\delta_i$ and the {\em standard linearization constraints} 
$$\delta_i\leq x_j\text{~for~}j\in S_i\text{~and~}\delta_i\geq 1-\sum_{j\in S_i}(1-x_j).$$ 
These constraints enforce the condition that when $x \in \{0,1\}^n$, then $\delta \in \{0,1\}^m$ and $\delta_i = \prod_{j \in S_i} x_j$, and therefore
	\[ f(x)=\beta+\sum_{i=1}^m\gamma_i \delta_i \mbox{ if } x\in \{0,1\}^n. \]
This is the basis of Fortet's result \cite{fortet} which states that every binary polynomial optimization problem can be formulated as an integer programming problem where all discrete variables are restricted to be binary. The constraints in this integer program that involve the relationship between the $\delta$ and $x$ variables form the {\em multilinear set}, which is defined as the set of binary points $(\delta,x)\in\{0,1\}^{m+n}$  subject to the constraints that each $\delta_i$ variable is the product of certain $x_j$ variables:
\begin{displaymath}
Y = \Big\{(\delta,x)\in\{0,1\}^{m+n}:\delta_i=\prod_{j\in S_i}x_j,~i=1,\ldots,m\Big\}.
\end{displaymath}
Unconstrained binary polynomial optimization is equivalent to optimizing a linear function over the multilinear set.
The convex hull of the multilinear set is called the {\em multilinear polytope} \cite{del2017polyhedral}, and several classes of valid inequalities for the multilinear polytope have been proposed recently \cite{crama2017class,del2017polyhedral,del2018decomposability,del2018multilinear,del2021running}. The {\em boolean quadric polytope} \cite{padberg1989boolean} is a multilinear polytope when each $|S_i| = 2$, i.e., $\delta_i$ is exactly a product of two distinct $x_j$ variables. The \emph{maximum monomial agreement} problem has been studied in the context of machine learning \cite{dbs, eg2, eg3, dgm}, and can be formulated as optimizing a linear function over a multilinear set.

The polyhedral structure of $\conv(Y)$ has been fully characterized in some cases. When the multilinear set is defined by a single nonlinear monomial, i.e., $m = 1$, then it is known (see, e.g., \cite{crama1993concave}) that the standard linearization of $Y$ gives $\conv(Y)$. When the nonlinear monomials have a nested structure, i.e., the sets $S_i$ have a nested structure, then the convex hull is obtained by augmenting the standard linearization constraints with the 2-link inequalities \cite{crama2017class} (call the associated polytope the {\em 2-link polytope}). This result follows from the work of Fischer, Fischer and McCormick \cite{fischer2018matroid}. Independently, Crama and Rodr\'iguez-Heck \cite{crama2017class} showed that the 2-link inequalities are facet-defining in the nested case. They also showed that the 2-link polytope is equal to $\conv(Y)$ when $m = 2$. Del Pia and Khajavirad gave complete descriptions of $\conv(Y)$ for multilinear sets associated with certain acyclic hypergraphs \cite{del2018multilinear,del2021running}.

The boolean quadric polytope with an upper bound constraint on the number of nonzero $x_j$ variables was studied in \cite{mehrotra1997cardinality}. Optimization over the cardinality constrained boolean quadric polytope has applications in, for example, the maximum edge weight clique problem \cite{sorensen2004new,fomeni2017new,hosseinian2017maximum}.
In \cite{chen2020cardinality,chen2020multilinear}, Chen, Dash and G\"{u}nl\"{u}k studied the cardinality constrained multilinear set (CCMS), defined as\begin{displaymath}
X = \Big\{(\delta,x)\in\{0,1\}^{m+n}:\delta_i=\prod_{j\in S_i}x_j,~i=1,\ldots,m,~L\leq\sum_{j=1}^nx_j\leq U\Big\},
\end{displaymath}
where $L,U$ are nonnegative integers.
%
Optimizing a linear function over $X$ appears as a pricing subproblem in \cite{dash2018boolean}, where Dash, G\"{u}nl\"{u}k and Wei applied column generation to solve a binary classification problem.
An explicit polyhedral description of the convex hull when the sets $S_i$ are nested was given in \cite{chen2020cardinality}. We note that this description is significantly more complicated than the 2-link polytope.
When $L=0$, the above result also follows from the results of Fischer, Fischer and McCormick \cite{fischer2018matroid}, who gave a polyhedral description of the convex hull of $V = \{(\delta, x) \in Y : x \in \mathcal{M}\}$, where $\mathcal{M}$ is the independent set polytope of a matroid over $n$ elements.
When the matroid is a uniform matroid, $V$ is the same as $X$ with $L = 0$.
The results in \cite{fischer2018matroid} generalize earlier results of  Buchheim and Klein \cite{bk} and Fischer and Fischer \cite{ff}, who gave a polyhedral description for the quadratic minimum spanning tree problem with a single quadratic term in the objective. 

In the above papers, a tractable variant of the multilinear set is obtained by imposing a structure on the sets $S_i$ (e.g., nestedness), or bounding the number of such sets (e.g., $m = 1$ in \cite{bk} and \cite{ff}).
Motivated by the work of Crama and Rodr\'iguez-Heck \cite{crama2017class}, we consider the special case of the CCMS when $m=2$ (call it {\em CCMS-2}) and characterize its convex hull.
If we project the general CCMS onto the space of $x$ variables and any pair of $\delta_i$ variables, we get an example of CCMS-2, and thus valid inequalities for CCMS-2 yield valid inequalities for the general CCMS, and can be used to strengthen linear relaxations of cardinality constrained binary polynomial optimization problems.

For the sake of convenience, we apply a change of variables by letting $z_j=1-x_j$ for $j=1,\ldots,n$, $l=n-U$ and $u=n-L$, which leads to an affine transformation of the set $X$ into\begin{displaymath}
\Big\{(\delta,z)\in\{0,1\}^{m+n}:\delta_i=\prod_{j\in S_i}(1-z_j),~i\in I,~l\leq\sum_{j\in J}z_j\leq u\Big\}.
\end{displaymath}
We still call this set the CCMS. In the rest of the paper, we will work with the CCMS in this form mainly because convex hull description is visually more appealing (the inequalities look simpler).

The paper is organized as follows. In Section \ref{sec:prelim}, we review related results. We then give an extended formulation of CCMS-2 by introducing two auxiliary sets $S_i$ and associated $\delta_i$ variables and giving exponentially many valid inequalities. In Section \ref{sec:special}, we show that these valid inequalities give the convex hull for the special case when $S_1\cap S_2\neq\emptyset$ and $|S_1\cup S_2|\leq n-l$. In Section \ref{sec:separation}, we show that the separation problem for the given valid inequalities can be solved efficiently. In Section \ref{sec:general}, we generalize our results to the case when the conditions $S_1\cap S_2\neq\emptyset$ or $|S_1\cup S_2|\leq n-l$ do not necessarily hold.

\section{Preliminaries}\label{sec:prelim}
We next formally define the CCMS, give an integer programming formulation for it and review some concepts and results from  \cite{chen2020cardinality,chen2020multilinear}. 
Let $I$ and $J$ be two finite sets with  $|I|=m$ and $|J|=n$ and let $\cS:=\{S_i\}_{i\in I}$ be a family of distinct subsets of $J$ with $|S_i|\ge 1$ for  $i\in I$. 
Given two integers $l,u\ge 0$  such that $u\geq 2$ and $ u\ge l+1$, the CCMS associated with family $\cS$ can be formulated as follows:
\begin{displaymath}
	\XJDd(\cS):=\Big\{(\delta,z)\in\{0,1\}^{m+n}:\delta_i=\prod_{j\in S_i}(1-z_j),~i\in I,~l\leq\sum_{j\in J}z_j\leq u \Big\}.
\end{displaymath}
For simplicity, we will refer to this set as $\XJDd$ when the associated family $\cS$ is clear from the context.
Note that if $|S_i|> n-l$ then $\delta_i=0$ for all feasible $(\delta,z)\in \XJDd$, and we therefore can assume that $|S_i|\le n-l$ for all $i\in I$.
Let $\DJDd := \proj_{\delta}\XJDd$ denote the orthogonal projection of $\XJDd$ onto the space of $\delta$ variables.

We next define a property of $\cS$ called {\em properness} which has been used in \cite{chen2020multilinear} to give a complete characterization of  $\conv(\XJDd)$ when $\cS$ is nested.

\begin{defn}\label{DCdef}
	A family $\cS=\{S_i\}_{i\in I}$ of subsets of $J$ is called a \textit{{\goodfamily} family} (with respect to $l$ and $u$) if it satisfies the following properties:\begin{enumerate}
		\item $\DJDd$ is a set of exactly $|\cS|+1$ affinely independent points in $\bR^{|\cS|}$;
		\item $\cS$ is closed under nonempty intersection.
	\end{enumerate} 
\end{defn}
The following result combines Lemmas 3 and 5 in \cite{chen2020multilinear}.

\begin{thm}[Chen et al. \cite{chen2020multilinear}]\label{thm:facet}
	Assume $\cS$ is a proper family (with respect to $l$ and $u$). Then each facet of $\conv(\XJDd)$ can be defined by an inequality of the form $\alpha^Tz+\beta^T\delta\leq\gamma$ where one of the following two is true:\begin{enumerate}
		\item[(i)] {\bf Type-1 inequalities: }$\alpha=0$ and $\beta^T\delta\leq\gamma$ defines a facet of $\conv(\DJDd)$;
		\item[(ii)] {\bf Type-2 inequalities: }$\alpha\in\{0,\kappa \}^{n}$ for some $\kappa\in\{+1,-1\}$ and 
		\begin{equation}
			\gamma-\beta^T\bar{\delta}=\max_{z:(\bar{\delta},z)\in\XJDd}\alpha^Tz\label{type2def}
		\end{equation}
		for all $\bar{\delta}\in\DJDd$. 
	\end{enumerate}
	Moreover, any inequality satisfying $(ii)$ is valid for $\conv(\XJDd)$.
\end{thm}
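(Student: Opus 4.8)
The plan is to exploit the rigid structure forced by the first properness condition. Since $\DJDd=\{\bar\delta^0,\dots,\bar\delta^m\}$ consists of exactly $m+1$ affinely independent points, $\conv(\DJDd)$ is an $m$-dimensional simplex and every $\delta\in\conv(\DJDd)$ has \emph{unique} barycentric coordinates $\lambda=\lambda(\delta)$ with $\delta=\sum_i\lambda_i\bar\delta^i$. Writing $Z_i:=\{z:(\bar\delta^i,z)\in\XJDd\}$ and $C_i:=\conv(Z_i)$, a point $(\delta,z)$ lies in $\conv(\XJDd)$ iff $z=\sum_i\lambda_i(\delta)\,c_i$ for some $c_i\in C_i$. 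Fixing any $\alpha$, the support function $\phi_\alpha(\delta):=\max\{\alpha^Tz:(\delta,z)\in\conv(\XJDd)\}$ therefore decomposes as $\phi_\alpha(\delta)=\sum_i\lambda_i(\delta)\,M_i$ with $M_i:=\max_{z\in Z_i}\alpha^Tz$, so $\phi_\alpha$ is \emph{affine} on the simplex. Consequently $\alpha^Tz+\beta^T\delta\le\gamma$ is valid for $\conv(\XJDd)$ iff $\phi_\alpha(\delta)+\beta^T\delta\le\gamma$ on the simplex, and since the left side is affine this is equivalent to the vertex conditions $M_i+\beta^T\bar\delta^i\le\gamma$ for $i=0,\dots,m$. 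The ``Moreover'' claim is then immediate: an inequality satisfying $(ii)$ meets every vertex condition with equality, hence is valid.

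Next I would identify the face cut out by a valid inequality. With $T:=\{i:M_i+\beta^T\bar\delta^i=\gamma\}$ and $C_i^\ast:=\{z\in C_i:\alpha^Tz=M_i\}$, the identity $\alpha^Tz+\beta^T\delta=\sum_i\lambda_i(\alpha^Tc_i+\beta^T\bar\delta^i)$ shows that equality holds exactly when $\lambda_i>0\Rightarrow i\in T$ and $c_i\in C_i^\ast$; hence the tight face is the Cayley-type polytope $F=\conv\big(\bigcup_{i\in T}\{\bar\delta^i\}\times C_i^\ast\big)$ over the sub-simplex spanned by $\{\bar\delta^i:i\in T\}$. The key is a dimension count. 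Using affine independence of the $\bar\delta^i$ one gets $\dim\conv(\XJDd)=m+D$, where $D:=\dim(\sum_{i=0}^m C_i)$ is the dimension of the Minkowski sum of all fibers (the generic fiber direction), and likewise $\dim F=(|T|-1)+\dim(\sum_{i\in T}C_i^\ast)$. Since $\sum_{i\in T}C_i^\ast\subseteq\sum_{i=0}^mC_i$ forces $\dim(\sum_{i\in T}C_i^\ast)\le D$, the facet equation $\dim F=\dim\conv(\XJDd)-1$ yields $|T|\ge m$, so only two possibilities remain: $|T|=m+1$ or $|T|=m$.

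If $|T|=m+1$, every vertex condition is tight, i.e. $\gamma-\beta^T\bar\delta^i=M_i=\max_{z:(\bar\delta^i,z)\in\XJDd}\alpha^Tz$ for all $i$, which is exactly \eqref{type2def}; this is the Type-2 case. If $|T|=m$, the face omits a single vertex $\bar\delta^k$, and the dimension equality forces $\dim(\sum_{i\in T}C_i^\ast)=D$. Since $\sum_{i\in T}C_i^\ast\subseteq\sum_{i\in T}C_i\subseteq\sum_{i=0}^mC_i$, all three have dimension $D$; the first equality means $\alpha$ is constant on $\sum_{i\in T}C_i$ (otherwise its $\alpha$-optimal face would drop dimension), i.e. $\alpha$ is orthogonal to every fiber direction, and the second means these directions already span the full $D$-dimensional fiber space. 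Thus $\alpha^Tz$ agrees with an affine function of $\delta$ on all of $\conv(\XJDd)$, so $\alpha$ can be absorbed into $\beta$, giving an equivalent inequality with $\alpha=0$; its tight set projects onto the facet of $\conv(\DJDd)$ spanned by $\{\bar\delta^i:i\in T\}$, which is the Type-1 case.

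The main obstacle is the remaining assertion that in the Type-2 case $\alpha$ can be taken in $\{0,\kappa\}^n$ for a single sign $\kappa$. This no longer follows from the simplex/Cayley bookkeeping above and genuinely uses the combinatorics of the fibers $Z_i$, which are cardinality-constrained slabs of the cube (some coordinates forced to $0$ by the $1$-entries of $\bar\delta^i$, the rest subject to $l\le\sum_jz_j\le u$ together with covering constraints from the $0$-entries). Here I expect the second properness condition---closure of $\cS$ under nonempty intersection---to be essential, since it controls how the forced and free coordinate sets of the different $\bar\delta^i$ interlock, so that an $\alpha$ that is simultaneously facet-inducing and $\alpha$-optimal over all $m+1$ fibers cannot take two distinct nonzero values. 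The plan would be to first describe the facet-defining directions of a single cardinality slab (whose nonzero coefficients are all equal up to sign), and then argue that imposing tightness across the whole family leaves only one admissible sign, yielding $\alpha\in\{0,\kappa\}^n$.
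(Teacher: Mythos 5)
First, a point of reference: this paper does not actually prove Theorem~\ref{thm:facet} --- it is imported from \cite{chen2020multilinear} (combining Lemmas 4 and 6 there) --- so your attempt has to be judged on its own merits rather than against an in-paper argument. The Cayley-trick skeleton you set up is sound: because $\DJDd$ consists of $m+1$ affinely independent points, $\conv(\XJDd)$ is the Cayley polytope of the fibers $C_i=\conv(Z_i)$, the support function $\phi_\alpha$ is indeed affine on the simplex, so validity of $\alpha^Tz+\beta^T\delta\leq\gamma$ is equivalent to the $m+1$ vertex conditions (which immediately gives the ``Moreover'' part); the dimension formulas $\dim\conv(\XJDd)=m+\dim\big(\sum_iC_i\big)$ and $\dim F=(|T|-1)+\dim\big(\sum_{i\in T}C_i^\ast\big)$ are correct, and the resulting dichotomy $|T|\in\{m,m+1\}$, with the absorption argument in the $|T|=m$ case giving Type-1 and the $|T|=m+1$ case giving exactly condition \eqref{type2def}, is valid.

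The genuine gap is the part you defer: the claim that a Type-2 facet can be defined with $\alpha\in\{0,\kappa\}^n$ for a \emph{single} $\kappa\in\{+1,-1\}$. This is not a technical loose end but the heart of the theorem, and it is exactly what the rest of the paper consumes: Lemmas \ref{lem:type2ge0} and \ref{lem:type2le0} enumerate type-2 facets through the support set $T=\{j:\alpha_j=\kappa\}$ and the indicators $t_0,t_1,t_2$, which is meaningless for general $\alpha\in\R^n$. Note also that when $\conv(\XJDd)$ is full-dimensional a facet-defining inequality is unique up to positive scaling, so this is a rigid structural assertion about the polytope, not a matter of selecting a convenient representative from a large cone of defining inequalities. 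Moreover, the plan you sketch is unlikely to work as stated: the fibers $Z_i$ are not plain cardinality slabs --- for each $t$ with $\bar\delta^i_t=0$ they carry covering constraints $\sum_{j\in S_t}z_j\geq 1$ --- and, more importantly, the face $C_i^\ast$ carved out of $C_i$ by a facet of $\conv(\XJDd)$ need not be a facet of $C_i$, so knowing the facet normals of a single fiber does not pin down $\alpha$; the restriction of $\alpha$ to a fiber is only constrained to lie in a normal cone, which can be full-dimensional. What is actually needed is an exchange argument across fibers (swap a coordinate of $z$ from $1$ to $0$ and another from $0$ to $1$, respecting the cardinality bounds and the covering constraints) that propagates equality of the nonzero entries of $\alpha$ and excludes mixing $+1$ with $-1$, and this is where properness --- in particular closure of $\cS$ under nonempty intersection --- enters; that argument is the content of Lemma 6 of \cite{chen2020multilinear}. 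As it stands, your proof establishes only the weaker statement that every facet is Type-1 or satisfies \eqref{type2def} with unrestricted $\alpha$.
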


Note that when  $\cS$  is  a {{\goodfamily} family},  $\conv(\DJDd)$ is a simplex with exactly $|\cS|+1$ facets and therefore it is straightforward to describe all type-1 facets of $\conv(\XJDd)$  when  $\cS$  is  a {{\goodfamily} family}.

\subsection{An extended formulation for CCMS-2}
We now focus on the case when $\cS=\{S_1,S_2\}$ and $n-l\ge|S_i|\geq 2$ for $i\in \{1,2\}$. If $\cS$ is nested (i.e., $S_1\subset S_2$ or $S_1\supset S_2$), then the linear inequality description of $\conv(\XJDd(\cS))$ has been given in \cite{chen2020cardinality,chen2020multilinear}.
(Also see \cite{crama2017class} for the special case when $u=n$ and $l=0$.)
We therefore consider the case when $S_1\setminus S_2\neq\emptyset$ and $S_2\setminus S_1\neq\emptyset$.

To make use of Theorem \ref{thm:facet}, we define the extended family of subsets  $\bar\cS=\{S_0,S_1,S_2,S_3\}$ where $S_0:=S_1\cap S_2$ and $S_3:=S_1\cup S_2$.
Note that the extended family $\bar\cS$ is closed under nonempty intersection and therefore satisfies one of the conditions of properness given in Definition \ref{DCdef}. We next define two 
additional variables $\delta_0$ and $\delta_3$ associated with the sets $S_0$ and $S_3$ as follows:
$$\delta_0=\prod_{j\in S_1\cap S_2}(1-z_j),~~~\delta_3=\prod_{j\in S_1\cup S_2}(1-z_j).$$

Note that even though we assumed $n-l\ge|S_i|\geq 2$ for $i=1,2$, it is possible that the new sets $S_0$ and $S_3$ would not satisfy this property. 
We therefore use the convention $\delta_0=1$ when $|S_0|=0$ (i.e.if $S_1\cap S_2=\emptyset$).
Also note that if $|S_1\cup S_2|>n-l$ then $\delta_3=0$ due to the lower bound on the sum of the $z$ variables. 
From now on we will focus on the the extended family $\bar\cS=\{S_0,S_1,S_2,S_3\}$ and  note that a linear inequality description of $\conv(\XJDd(\bar{\cS}))$ would give an extended formulation for $\conv(\XJDd(\cS))$. 
To simplify notations, we denote $\XJDd(\bar{\cS})$ by $\bar{X}^{l,u}$, and its orthogonal projection onto the $\delta$ space by $\bDJDd\subseteq\{0,1\}^4$. 
We next present a {\em naive} formulation for the  $\bar{X}^{l,u}$ and then present valid inequalities for it which would lead to a complete linear inequality description of its convex hull:
\begin{align*}\bar{X}^{l,u}=\Big\{z\in\{0,1\}^{|J|},\delta\in\{0,1\}^4:		~&z_j+\delta_i\leq1,~ j\in S_i,~ i\in\{0,1,2,3\};\\
	&\delta_i+\sum_{j\in S_i}z_j\ge1,~i\in\{0,1,2,3\};~u\ge\sum_{j\in J}z_j\ge l\Big\}.
\end{align*}

\subsection{Type-1 inequalities for $\bar{X}^{l,u}$ and a new formulation}
When $\bar\cS$ is a proper family, Theorem \ref{thm:facet} establishes that all type-1 inequalities that are facet-defining for $\conv(\XJDd(\bar{\cS}))$ have to be one of the inequalities that defines a facet  of $\conv(\bDJDd)$. 
Note that   $S_0 \subset S_1,S_2 $ implies that $\delta_0\ge \delta_1,\delta_2$, similarly  $S_1,S_2 \subset S_3$ implies that $\delta_1,\delta_2\ge\delta_3$, and $\delta_3=1$ if $\delta_1=\delta_2=1$ for all feasible $\delta\in\bDJDd$. Consequently, $\bDJDd\subseteq \Delta $ where
\begin{equation}\label{set:bDelta0}
	\Delta := \Big\{(0,0,0,0),(1,0,0,0),(1,1,0,0),(1,0,1,0),(1,1,1,1)\Big\}.
\end{equation}
Also note that $\bDJDd$ is strictly contained in $ \Delta $ when at least one of the following two conditions hold:\begin{enumerate}
	\item[$(i)$] If $S_1\cap S_2=\emptyset$, then $(0,0,0,0)\not\in\bDJDd$, as $\delta_0=1$ for all $\delta\in\bDJDd$;
	\item[$(ii)$] If $|S_1\cup S_2|>n-l$, then $(1,1,1,1)\not\in\bDJDd$, as $\delta_3=0$ for all $\delta\in\bDJDd$.
\end{enumerate}
If either $S_1\cap S_2=\emptyset$ or $|S_1\cup S_2|>n-l$, then $\bar\cS$ is not a proper family as $\bDJDd$ contains fewer than $|\cS|+1=5$ points.
We next give an inequality description of the convex hull of $\Delta$.

\begin{lem}\label{lem:simplex}
	The following inequalities give the convex hull of $\Delta$.\begin{align}
		\delta_0\leq&~1,\label{d0le1}\\
		-\delta_3\leq&~0,\label{d3ge0}\\
		\delta_3\leq&~\delta_i,~~~~~~~~~~~~~~~~~~i\in\{1,2\},\label{d3ledi}\\
		\delta_1+\delta_2\leq&~\delta_0+\delta_3.\label{dineq}
	\end{align}
\end{lem}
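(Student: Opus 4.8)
The plan is to prove the two inclusions separately; the forward inclusion is routine substitution, while the reverse inclusion carries the content. Write $P$ for the polytope defined by (\ref{d0le1})--(\ref{dineq}). First I would check that each of the five points listed in (\ref{set:bDelta0}) satisfies all four inequalities. This is a direct evaluation (for instance $(1,1,1,1)$ attains equality in both (\ref{d3ledi}) and (\ref{dineq})), and it shows the inequalities are valid for $\conv(\Delta)$, giving $\conv(\Delta)\subseteq P$.

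For the reverse inclusion I would use the fact that the five points of $\Delta$ are affinely independent: subtracting $(0,0,0,0)$ from the remaining four yields four vectors whose matrix row-reduces to the identity, so $\conv(\Delta)$ is a $4$-simplex and each point of $\bR^4$ has a unique expression as an affine combination of the five vertices. The key step is therefore, for a given $\delta=(\delta_0,\delta_1,\delta_2,\delta_3)\in P$, to solve the affine system that writes $\delta$ in barycentric coordinates $(\lambda_1,\dots,\lambda_5)$ against the vertices in the order listed in (\ref{set:bDelta0}). Matching coordinates gives $\lambda_5=\delta_3$, $\lambda_3=\delta_1-\delta_3$, $\lambda_4=\delta_2-\delta_3$, $\lambda_2=\delta_0-\delta_1-\delta_2+\delta_3$, and $\lambda_1=1-\delta_0$, and one verifies directly that $\sum_{i=1}^5\lambda_i=1$ (the $\delta$-terms cancel).

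The payoff of this computation is that the five nonnegativity constraints $\lambda_i\ge0$ correspond one-to-one to the inequalities defining $P$: $\lambda_1\ge0$ is (\ref{d0le1}), $\lambda_5\ge0$ is (\ref{d3ge0}), $\lambda_3\ge0$ and $\lambda_4\ge0$ are the two instances of (\ref{d3ledi}), and $\lambda_2\ge0$ is (\ref{dineq}). Hence for every $\delta\in P$ the coefficients $\lambda_i$ are nonnegative and sum to $1$, exhibiting $\delta$ as a convex combination of the points of $\Delta$; this proves $P\subseteq\conv(\Delta)$ and closes the argument.

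I do not anticipate a genuine obstacle: the sole real work is inverting the affine system correctly and confirming that the barycentric coordinates sum to $1$. The only point requiring care is the bookkeeping in the expression for $\lambda_2$ and the cancellations producing $\lambda_1=1-\delta_0$, since a sign slip there would destroy the clean correspondence with (\ref{dineq}) and (\ref{d0le1}).
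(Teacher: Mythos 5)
Your proposal is correct and rests on the same key observation as the paper's proof: the five points of $\Delta$ are affinely independent, so $\conv(\Delta)$ is a full-dimensional $4$-simplex in $\bR^4$ whose five facets are exactly inequalities \eqref{d0le1}--\eqref{dineq}. The paper asserts this facet correspondence without computation, while you verify it explicitly by inverting the affine system to get barycentric coordinates $\lambda_1=1-\delta_0$, $\lambda_2=\delta_0-\delta_1-\delta_2+\delta_3$, $\lambda_3=\delta_1-\delta_3$, $\lambda_4=\delta_2-\delta_3$, $\lambda_5=\delta_3$ (which are indeed correct and sum to $1$); this is simply the detailed execution of the same argument.
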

\begin{proof}
	It is easy to check that the inequalities above are satisfied by all $\delta\in \Delta$ (given in equation \eqref{set:bDelta0}) and therefore they are valid for their convex hull.
	To see that the 5 inequalities above give the convex hull of $\Delta$, note that $\Delta$ consists of 5 points in $\R^4$ that define a full-dimensional simplex. Inequalities \eqref{d0le1}-\eqref{dineq} precisely define the facets of that simplex.
\end{proof}
Note that as $\bDJDd\subseteq \Delta$, inequalities \eqref{d0le1}-\eqref{dineq} are valid for $\bDJDd$.

We next present the  {\em standard linearization inequalities} which,  together with inequalities \eqref{d0le1}-\eqref{dineq} and integrality constraints give a reformulation of $\bar{X}^{l,u}$:
\begin{align}
	\sum_{j\in J}z_j\leq &~u,\label{cardconu}\\
	z_j+\delta_i\leq &~1, &&j\in S_i,~i\in\{0,1,2\},\label{zjdile1}\\
	z_j\leq &~1, &&j\in J,\label{zjle1}\\
	1-\delta_0-\sum_{j\in S_0}z_j\leq &~0,\label{zjdige1}\\
	\delta_0-\delta_i-\sum_{j\in S_i\setminus S_0}z_j\leq&~0,&&i\in\{1,2\},\label{2link}\\
	-z_j\leq &~0, &&j\in J,\label{zjge0}\\
	-\sum_{j\in J}z_j\leq&-l,\label{cardconl}
\end{align}
It is easy to see that inequalities \eqref{cardconu}-\eqref{cardconl} are valid for $\bXJDd$, and together with inequalities \eqref{d0le1}-\eqref{dineq} they imply all the inequalities used in the naive formulation. Consequently,  the inequality system  \eqref{d0le1}-\eqref{cardconl} together with integrality constraints gives a valid integer linear  description of $\bXJDd$.
We note that inequalities \eqref{2link} are referred to as 2-link inequalities in \cite{crama2017class}. 

\subsection{Type-2 inequalities for $\bar{X}^{l,u}$}
In this section we present two sets of inequalities and show that they are valid for $\bar{X}^{l,u}$.
We will later show that these inequalities subsume all  facet-defining type-2 inequalities for $\bar{X}^{l,u}$ that are different from the standard linearization inequalities \eqref{cardconu}-\eqref{cardconl}.
This would lead to an inequality description of $\conv(\bXJDd)$ provided that $\bDJDd=\Delta$.

The first set of inequalities that we present below are in fact valid for $\bXJD$ as they do not depend on the lower bound $l$.
Also note that the coefficient of  the $z_j$ variables in these 
inequalities are either $0$ or $+1$ and they all have a right-hand side of $u$.
\begin{prop}\label{prop:ineqs1}
	Let $\Sprime \subseteq J$, the following inequalities are valid for $\bXJD(\supseteq \bXJDd)$:
	\begin{align}
		\sum_{j\in \Sprime }z_j+(u-|\Sprime \setminus S_0|)\delta_0+\sum_{i=1}^2(|\Sprime \cap S_i\setminus S_0|)\delta_i\leq &~u&&\text{if }|\Sprime \setminus S_0|\leq u,\label{eq:ineq1}\\
		\sum_{j\in \Sprime }z_j+\sum_{i=1}^2(u-|\Sprime \setminus S_i|)\delta_i+(|\Sprime \setminus S_0|-u)\delta_3\leq&~u,\label{eq:ineq2}\\
		\sum_{j\in \Sprime }z_j+\delta_0-\delta_i+(u-1-|\Sprime \setminus S_k|)\delta_k+\delta_3\leq&~u,&&\text{if }Q\cap(S_i\setminus S_0)=\emptyset,~\{i,k\}=\{1,2\},\label{eq:ineq3}\\
		\sum_{j\in \Sprime }z_j+(u-|\Sprime \setminus S_i|)\delta_i+(|\Sprime \cap S_3\setminus S_i|)\delta_3\leq&~u&&\text{if }|\Sprime \setminus S_i|\leq u,~i\in\{1,2\},\label{eq:ineq4}\\
		\sum_{j\in \Sprime }z_j+(u-|\Sprime \setminus S_3|)\delta_3\leq&~u&&\text{if }|\Sprime \setminus S_3|\leq u.\label{eq:ineq5}
	\end{align}
\end{prop}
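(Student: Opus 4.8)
The plan is to prove validity directly, by checking each inequality on every binary point of $\bXJD$ through a finite case analysis on the pattern of the $\delta$-vector. The starting observation is that for a binary $z$ and any $T\subseteq J$ one has $\prod_{j\in T}(1-z_j)=1$ iff $z_j=0$ for all $j\in T$; hence, writing $N:=\{j\in J:z_j=1\}$ for the support of $z$, we have $\delta_i=1$ exactly when $N\cap S_i=\emptyset$. Because $S_0\subseteq S_1,S_2\subseteq S_3$ forces $\delta_3\le\delta_1,\delta_2\le\delta_0$, the only $\delta$-vectors that can occur are the five points of $\Delta$ in \eqref{set:bDelta0}. So it suffices to fix $(\delta,z)\in\bXJD$, read off which of these five patterns $\delta$ realizes, and verify the corresponding inequality.

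First I would record the two elementary facts that drive every case. (a) If $\delta_i=1$ then $N\cap S_i=\emptyset$, whence $\sum_{j\in\Sprime}z_j=\sum_{j\in \Sprime\setminus S_i}z_j\le|\Sprime\setminus S_i|$; and trivially $\sum_{j\in\Sprime}z_j\le\sum_{j\in J}z_j\le u$ by \eqref{cardconu}. (b) The set identities coming from $S_0=S_1\cap S_2$ and $S_3=S_1\cup S_2$: the three sets $\Sprime\cap(S_1\setminus S_0)$, $\Sprime\cap(S_2\setminus S_0)$ and $\Sprime\setminus S_3$ partition $\Sprime\setminus S_0$, and by inclusion–exclusion $|\Sprime\setminus S_1|+|\Sprime\setminus S_2|-|\Sprime\setminus S_0|=|\Sprime\setminus S_3|$ together with $|\Sprime\setminus S_i|=|\Sprime\cap S_3\setminus S_i|+|\Sprime\setminus S_3|$ for $i\in\{1,2\}$.

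With these in hand, each inequality reduces pattern-by-pattern to one of the trivial bounds in (a). For \eqref{eq:ineq1}, the pattern $(1,0,0,0)$ turns the left-hand side into $\sum_{j\in\Sprime}z_j+(u-|\Sprime\setminus S_0|)$, and fact (a) with $i=0$ gives exactly $\sum_{j\in\Sprime}z_j\le|\Sprime\setminus S_0|$; the pattern $(1,1,0,0)$ collapses, via the partition identity, to $\sum_{j\in\Sprime}z_j\le|\Sprime\setminus S_1|$; and $(1,1,1,1)$ collapses, applying that identity to both $S_1$ and $S_2$, to $\sum_{j\in\Sprime}z_j\le|\Sprime\setminus S_3|$. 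The same template handles \eqref{eq:ineq2}, \eqref{eq:ineq4} and \eqref{eq:ineq5}, with the inclusion–exclusion identity doing the work in the saturated pattern $(1,1,1,1)$. The side conditions of the form $|\Sprime\setminus S_i|\le u$ are exactly what keeps the corresponding $\delta$-coefficient nonnegative, so that the inequality genuinely has the advertised form with right-hand side $u$.

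The step I expect to require the most care is the mixed inequality \eqref{eq:ineq3}, whose $\delta$-part carries both $+\delta_0$, $+\delta_3$ and $-\delta_i$: here one must check that the coefficients cancel correctly so that the reduced bound in each admissible pattern is again one of the bounds in (a), and in particular that the patterns in which only some of $\delta_0,\delta_3$ are switched on do not push the left-hand side above $u$. This is pure bookkeeping rather than a new idea, but it is the place where a sign or a count is easiest to get wrong, so I would treat all five patterns of \eqref{eq:ineq3} explicitly and track where the global bound $\sum_{j\in J}z_j\le u$ versus the local bounds $\sum_{j\in\Sprime}z_j\le|\Sprime\setminus S_i|$ is actually used. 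Once every inequality has been verified on every admissible pattern, validity for all of $\bXJD$ — and hence for $\bXJDd\subseteq\bXJD$ — follows at once.
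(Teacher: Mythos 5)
Your strategy --- fix $(\delta,z)\in\bXJD$, note that $\delta$ must be one of the five patterns in $\Delta$, and verify each inequality pattern-by-pattern using the local bounds $\sum_{j\in \Sprime}z_j\le|\Sprime\setminus S_i|$ (available when $\delta_i=1$) and the global bound $\sum_{j\in \Sprime}z_j\le u$ --- is the same as the paper's, and your sketches for \eqref{eq:ineq1}, \eqref{eq:ineq2}, \eqref{eq:ineq4} and \eqref{eq:ineq5} are correct (indeed more self-contained than the paper, which handles \eqref{eq:ineq1}, \eqref{eq:ineq4}, \eqref{eq:ineq5} by citing the generalized inequalities of an earlier paper). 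The genuine gap is exactly the step you deferred as ``pure bookkeeping'': for \eqref{eq:ineq3}, the pattern $\delta=(1,0,0,0)$ cannot be closed by your facts (a), and in fact cannot be closed at all, because the inequality as stated is not valid there. In that pattern \eqref{eq:ineq3} reduces to $\sum_{j\in \Sprime}z_j+1\le u$; since $\delta_1=\delta_2=\delta_3=0$, no local bound applies, and the global bound only gives $\sum_{j\in \Sprime}z_j\le u$. Concretely, take $J=\{1,2,3,4\}$, $S_1=\{1,2\}$, $S_2=\{2,3\}$ (so $S_0=\{2\}$, $S_3=\{1,2,3\}$), $u=2$, $l=0$, and $z_1=z_3=1$, $z_2=z_4=0$: then $\delta=(1,0,0,0)$ and $(\delta,z)\in\bXJD$, but for $\Sprime=\{1,3\}$ and $(i,k)=(1,2)$ the left-hand side of \eqref{eq:ineq3} equals $2+1-0+(2-1-1)\cdot 0+0=3>2=u$.

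To be fair, this is not a defect you could have repaired within your plan: the proposition is missing a hypothesis on $\Sprime$ for \eqref{eq:ineq3}, and the paper's own proof has the very same hole --- it checks the patterns $\delta_0=0$, $\delta_3=1$, $(1,1,0,0)$ and $(1,0,1,0)$, and silently skips $(1,0,0,0)$. The hypothesis that repairs the statement, and that matches the only way \eqref{eq:ineq3} is actually invoked later (Case 2 in the proof of Lemma \ref{lem:type2ge0}, where $T\cap(S_2\setminus S_0)=\emptyset$ and the inequality is used with $(i,k)=(2,1)$), is $\Sprime\cap(S_i\setminus S_0)=\emptyset$, where $i$ is the index carrying the coefficient $-\delta_i$. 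Under that condition your case analysis does close: in the pattern $(1,0,0,0)$, writing $N$ for the support of $z$, we have $N\cap S_0=\emptyset$ and $N\cap S_i\neq\emptyset$, so some element of $N$ lies in $S_i\setminus S_0$ and hence outside $\Sprime$, giving $\sum_{j\in \Sprime}z_j=|N\cap \Sprime|\le|N|-1\le u-1$, as required. So: add this hypothesis, treat the pattern $(1,0,0,0)$ explicitly alongside the other four, and your argument becomes complete --- and, unlike the paper's, fully self-contained.
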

\begin{proof} Let $(\delta,z)\in \bXJD$.
	We first show validity of \eqref{eq:ineq2} using the possible values of $\delta\in \bDJD\subseteq \Delta$ shown in \eqref{set:bDelta0}. If $\delta_3=1$, then $z_j=0$ for $j\in S_3$, $\delta=(1,1,1,1)$ and \eqref{eq:ineq2} becomes $\sum_{j\in \Sprime \setminus S_3}z_j\leq|\Sprime \setminus S_3|$, which is implied by \eqref{zjle1}. If $(\delta_1,\delta_2,\delta_3)=\mathbf{0}$, then \eqref{eq:ineq2} is implied by \eqref{cardconu} and \eqref{zjge0}. If $(\delta_1,\delta_2,\delta_3)=(1,0,0)$, then $z_j=0$ for $j\in S_1$ and \eqref{eq:ineq2} becomes $\sum_{j\in \Sprime \setminus S_1}z_j\leq |\Sprime \setminus S_1|$, which is implied by \eqref{zjle1}. Similarly, if $(\delta_1,\delta_2,\delta_3)=(0,1,0)$, then \eqref{eq:ineq2} is implied by \eqref{zjle1}.
	
	Similarly, we next show validity of \eqref{eq:ineq3}. Without loss of generality, assume $i=1$ and $k=2$. If $\delta_0=0$, then $\delta=(0,0,0,0)$ and \eqref{eq:ineq3} is implied by \eqref{cardconu} and \eqref{zjge0}. If $\delta_3=1$, then $z_j=0$ for $j\in S_3$, $\delta=(1,1,1,1)$ and \eqref{eq:ineq3} becomes $\sum_{j\in \Sprime \setminus S_3}z_j\leq |\Sprime \setminus S_2|$, which is implied by \eqref{zjle1}. If $\delta=(1,1,0,0)$, then \eqref{eq:ineq3} is implied by \eqref{cardconu} and \eqref{zjge0}. If $\delta=(1,0,1,0)$, then $z_j=0$ for $j\in S_2$ and \eqref{eq:ineq3} becomes $\sum_{j\in \Sprime \setminus S_2}z_j\leq |\Sprime \setminus S_2|$, which is implied by \eqref{zjle1}. If $\delta=(1,0,0,0)$, then \eqref{eq:ineq3} becomes $\sum_{j\in Q}z_j\leq u-1$, which is implied by \eqref{cardconu}, \eqref{zjge0} and the fact that $\sum_{j\in S_2\setminus S_0}z_j\geq 1$ if $(\delta_0,\delta_2)=(1,0)$.
	
	Validity of inequalities \eqref{eq:ineq1}, \eqref{eq:ineq4} and \eqref{eq:ineq5} follows from the validity of the generalized inequalities in \cite[Proposition 20]{chen2020multilinear}.
\end{proof}

We now present the second set of valid inequalities  that have the property that the coefficient of the $z_j$ variables are either $-1$ or $0$ and they all have a right-hand side of $0$. 
Also note that these inequalities are valid for $\bXJd$ as they do not depend on the upper bound $u$.

\begin{prop}\label{prop:ineqs2}
	For any $\Sprime \subseteq J$, the following inequalities are valid for $\bXJd(\supseteq\bXJDd)$:
	\begin{align}
		-\sum_{j\in \Sprime }z_j+(l+|\Sprime \cup S_0|-n)\delta_0+\sum_{i=1}^2(|S_i\setminus S_0\setminus \Sprime |)\delta_i\leq&~0&&\text{if }|\Sprime \cup S_0|\geq n-l,\label{eq:ineq6}\\
		-\sum_{j\in \Sprime }z_j+\sum_{i=1}^2(l+|\Sprime \cup S_i|-n)\delta_i+(n-|\Sprime \cup S_0|-l)\delta_3\leq&~0,\label{eq:ineq7}\\
		-\sum_{j\in \Sprime }z_j+\delta_0-\delta_i+(l+|\Sprime \cup S_k|-1-n)\delta_k+\delta_3\leq&~0,&&\text{if }S_i\setminus S_0\subseteq Q,~\{i,k\}=\{1,2\},\label{eq:ineq8}\\
		-\sum_{j\in \Sprime }z_j+(l+|\Sprime \cup S_i|-n)\delta_i+(|S_3\setminus S_i\setminus \Sprime |)\delta_3\leq&~0&&\text{if }|\Sprime \cup S_i|\geq n-l,~i\in\{1,2\},\label{eq:ineq9}\\
		-\sum_{j\in \Sprime }z_j+(l+|\Sprime \cup S_3|-n)\delta_3\leq&~0&&\text{if }|\Sprime \cup S_3|\geq n-l.\label{eq:ineq10}
	\end{align}
\end{prop}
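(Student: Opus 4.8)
The plan is to mirror the proof of Proposition~\ref{prop:ineqs1}. Since the inequalities \eqref{eq:ineq6}--\eqref{eq:ineq10} do not depend on $u$, it suffices to verify validity over $\bXJd$, and because $\proj_\delta\bXJd=\bDJd\subseteq\Delta$ (the order relations $\delta_0\ge\delta_1,\delta_2\ge\delta_3$ hold independently of the bounds, so the projection is contained in the five points listed in \eqref{set:bDelta0}), it is enough to fix $\delta$ at each of these five values and check the resulting inequality in the $z$ variables alone. In every case the reduced inequality should be implied by the box constraints $0\le z_j$ \eqref{zjge0}, $z_j\le1$ \eqref{zjle1} and the lower cardinality bound $\sum_{j\in J}z_j\ge l$ \eqref{cardconl}. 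These five families are exactly the lower-bound analogues of \eqref{eq:ineq1}--\eqref{eq:ineq5}, obtained informally by replacing each set difference $Q\setminus S_i$ by the union $Q\cup S_i$ and $u$ by the complementary count; I expect the whole argument to run parallel to the $u$-side, with the roles of \eqref{zjge0} and \eqref{zjle1} exchanged.

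The engine of every case is the following single reduction. If $\delta_i=1$ for some $i$, then $\prod_{j\in S_i}(1-z_j)=1$ forces $z_j=0$ for all $j\in S_i$, so $\sum_{j\in Q}z_j=\sum_{j\in Q\setminus S_i}z_j$; combining \eqref{cardconl} with $z_j\le1$ on $J\setminus(Q\cup S_i)$ then gives $\sum_{j\in Q}z_j\ge l-|J\setminus(Q\cup S_i)|=l+|Q\cup S_i|-n$. This lower bound is precisely the coefficient attached to $\delta_i$ in \eqref{eq:ineq6}, \eqref{eq:ineq9} and \eqref{eq:ineq10}, and the side conditions (such as $|Q\cup S_i|\ge n-l$) serve only to keep that coefficient nonnegative. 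I therefore expect \eqref{eq:ineq6}, \eqref{eq:ineq9} and \eqref{eq:ineq10} to follow directly from this reduction, just as their counterparts \eqref{eq:ineq1}, \eqref{eq:ineq4}, \eqref{eq:ineq5} follow from the generalized inequalities of \cite{chen2020multilinear}; I would either invoke that reference or spell out the three short vertex checks.

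The two inequalities that require the full vertex-by-vertex analysis are \eqref{eq:ineq7} and \eqref{eq:ineq8}, and I would treat them exactly as \eqref{eq:ineq2} and \eqref{eq:ineq3} are treated above. For \eqref{eq:ineq7} the vertices $(0,0,0,0)$ and $(1,0,0,0)$ reduce to $\sum_{j\in Q}z_j\ge0$, the vertices $(1,1,0,0)$ and $(1,0,1,0)$ reduce to the single-set bound of the previous paragraph, and the only interesting case is $(1,1,1,1)$, where the three nonzero coefficients add up. There the computation collapses after using the inclusion--exclusion identity $|Q\cup S_1|+|Q\cup S_2|=|Q\cup S_0|+|Q\cup S_3|$ (valid because $S_0=S_1\cap S_2$ and $S_3=S_1\cup S_2$), which turns the constant into $l+|Q\cup S_3|-n$ and reduces the inequality to the already-verified $(1,1,1,1)$ case of \eqref{eq:ineq10}. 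The vertices $(1,1,1,1)$, $(1,1,0,0)$, $(1,0,1,0)$ and $(0,0,0,0)$ of \eqref{eq:ineq8} are handled by the same single-set reductions, noting that at $(1,1,1,1)$ the terms $\delta_0-\delta_i$ cancel so that the $S_3$-forcing bound suffices.

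The main obstacle I anticipate is the vertex $(1,0,0,0)$ of \eqref{eq:ineq8}. There $\delta_0-\delta_i$ contributes $+1$ while $\delta_1,\delta_2,\delta_3$ all vanish, so the reduction must produce a lower bound on $\sum_{j\in Q}z_j$ out of the purely \emph{disjunctive} information carried by $\delta_1=\delta_2=0$ (each forces some $z_j=1$, in $S_1\setminus S_0$ and $S_2\setminus S_0$ respectively) together with $z_j=0$ on $S_0$. This is the one place where the single-set forcing argument is insufficient and the interaction of the two monomials must be used carefully; I expect this case to be the crux of the proof and to pin down the precise coefficients and any side condition needed in \eqref{eq:ineq8}. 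With that vertex settled, the remaining cases follow by the routine reductions already described for \eqref{eq:ineq3}.
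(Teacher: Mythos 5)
Your plan follows the paper's proof almost line for line: validity is checked vertex by vertex over the five points of $\Delta$, the single-set forcing bound $\sum_{j\in Q}z_j\ge l+|Q\cup S_i|-n$ (when $\delta_i=1$) disposes of \eqref{eq:ineq6}, \eqref{eq:ineq9} and \eqref{eq:ineq10} (the paper cites the generalized inequalities of \cite{chen2020multilinear} for exactly these three), and your vertex analysis of \eqref{eq:ineq7}, including the inclusion--exclusion collapse $|Q\cup S_1|+|Q\cup S_2|=|Q\cup S_0|+|Q\cup S_3|$ at $(1,1,1,1)$, is correct and matches the paper's computation. Every case you actually completed is sound.

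The case you flagged as the crux, the vertex $\delta=(\delta_0,\delta_1,\delta_2,\delta_3)=(1,0,0,0)$ in \eqref{eq:ineq8}, is not merely hard: for arbitrary $Q\subseteq J$ the inequality is \emph{false} there, so no argument can close it as stated. Concretely, take $J=\{1,\dots,5\}$, $S_1=\{1,2\}$, $S_2=\{2,3\}$ (so $S_0=\{2\}$, $S_3=\{1,2,3\}$), $l=2$, $Q=\emptyset$, $(i,k)=(1,2)$, and the point $z=(1,0,1,0,0)$, $\delta=(1,0,0,0)$, which lies in $\bXJd$; the left-hand side of \eqref{eq:ineq8} equals $\delta_0-\delta_1=1>0$. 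The disjunctive information you identified ($\delta_0=1$ and $\delta_i=0$ force some $z_{j^*}=1$ with $j^*\in S_i\setminus S_0$) yields $\sum_{j\in Q}z_j\ge 1$ only if that witness is guaranteed to lie in $Q$, i.e., only under the missing side condition $S_i\setminus S_0\subseteq Q$; with that condition added, your reduction closes the vertex and the proposition becomes correct. Notably, the paper's own proof contains precisely the gap you were circling: its case analysis for \eqref{eq:ineq8} covers only $\delta_0=0$, $\delta_3=1$, $(1,1,0,0)$ and $(1,0,1,0)$, silently skipping $(1,0,0,0)$. The repair is harmless downstream, since the only place \eqref{eq:ineq8} is invoked (Case 2 of Lemma \ref{lem:type2le0}, with $(i,k)=(2,1)$ and $t_2=1$) has $S_2\setminus S_0\subseteq T$ by construction; the mirror-image condition $Q\cap(S_i\setminus S_0)=\emptyset$ is likewise needed for \eqref{eq:ineq3} in Proposition \ref{prop:ineqs1}, whose proof in the paper omits the same vertex.
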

\begin{proof}
	Let $(\delta,z)\in \bXJd$. 
	We first show validity of \eqref{eq:ineq7}  by considering possible values of $\delta\in \bDJd$.
	If $\delta_3=1$, then $z_j=0$ for $j\in S_3$, $\delta_3=(1,1,1,1)$ and \eqref{eq:ineq7} is equivalent to $-\sum_{j\in \Sprime \cup S_3}z_j\leq n-l-|\Sprime \cup S_3|$, which is implied by \eqref{cardconl} and \eqref{zjle1}. If $(\delta_1,\delta_2,\delta_3)=(0,0,0)$, then \eqref{eq:ineq7} is implied by \eqref{zjge0}. If $(\delta_1,\delta_2,\delta_3)=(1,0,0)$, then $z_j=0$ for $j\in S_1$ and \eqref{eq:ineq7} is equivalent to $-\sum_{j\in \Sprime \cup S_1}z_j\leq n-l-|\Sprime \cup S_1|$, which is implied by \eqref{cardconl} and \eqref{zjle1}.  Similarly, if $(\delta_1,\delta_2,\delta_3)=(0,1,0)$, then $z_j=0$ for $j\in S_2$ and \eqref{eq:ineq7} is implied by \eqref{cardconl} and \eqref{zjle1}.
	
	We next show validity of \eqref{eq:ineq8}. Without loss of generality, assume $i=1$ and $k=2$. If $\delta_0=0$, then $\delta=(0,0,0,0)$ and \eqref{eq:ineq8} is implied by \eqref{zjge0}. If $\delta_3=1$, then $z_j=0$ for $j\in S_3$, $\delta=(1,1,1,1)$ and \eqref{eq:ineq8} is equivalent to $-\sum_{j\in \Sprime \cup S_2}z_j\leq n-l-|\Sprime \cup S_2|$, which is implied by \eqref{cardconl} and \eqref{zjle1}. If $\delta=(1,1,0,0)$, then \eqref{eq:ineq8} is implied by \eqref{zjge0}. If $\delta=(1,0,1,0)$, then $z_j=0$ for $j\in S_2$ and \eqref{eq:ineq8} is equivalent to $-\sum_{j\in \Sprime \cup S_2}z_j\leq n-l-|\Sprime \cup S_2|$, which is implied by \eqref{cardconl} and \eqref{zjle1}. If $\delta=(1,0,0,0)$, then \eqref{eq:ineq8} becomes $-\sum_{j\in Q}z_j\leq -1$, which is implied by \eqref{zjge0} and the fact that $\sum_{j\in S_2\setminus S_0}z_j\geq 1$ if $(\delta_0,\delta_2)=(1,0)$.
	
	Validity of inequalities \eqref{eq:ineq6}, \eqref{eq:ineq9} and \eqref{eq:ineq10} follows from the validity of the generalized inequalities in \cite[Proposition 21]{chen2020multilinear}.
\end{proof}

In the remainder of this paper, we will show that inequalities \eqref{d0le1}-\eqref{eq:ineq10} that we have presented in this section are sufficient to describe $\conv(\bXJDd)$. 

\section{Special case: $S_1\cap S_2\neq\emptyset$ and $|S_1\cup S_2|\leq n-l$}\label{sec:special}
We first study the ``easy" case when $\DJDd=\Delta$, i.e., $S_1\cap S_2\neq\emptyset$ and $|S_1\cup S_2|\leq n-l$. In this case, \begin{equation}\label{set:bDelta}
	\bDJDd=\Delta=\{(0,0,0,0),(1,0,0,0),(1,1,0,0),(1,0,1,0),(1,1,1,1)\}
\end{equation}
is a set of $|\bar{S}|+1$ affinely independent points. Then $\bcS$ is a proper family (with respect to $l$ and $u$).
We can therefore use Theorem \ref{thm:facet} to characterize the facet-defining inequalities for  $\conv(\bXJDd)$.
We start with establishing the conditions under which $\conv(\bXJDd)$ is full-dimensional and describe its affine hull when it is not.

\begin{prop}\label{prop:affine_hull}
	Assume $S_0\neq\emptyset$ and $|S_3|\leq n-l$. If $|S_1\cap S_2|=|S_0|> 1$, then $\conv(\bXJDd)$ is full-dimensional. Otherwise, the affine hull of $\conv(\bXJDd)$ is defined by equality $z_{j_0}+\delta_0=1$ where $\{j_0\}=S_1\cap S_2$, which is implied by inequalities \eqref{zjdile1} and \eqref{zjdige1}.
\end{prop}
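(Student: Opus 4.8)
The plan is to compute the affine hull of $\conv(\bXJDd)$ by determining the full space of affine equations $\alpha^\top z+\beta^\top\delta=\gamma$ satisfied by every point of $\bXJDd$; the dimension of $\conv(\bXJDd)$ is then $n+4$ minus the rank of the coefficient vectors $(\alpha,\beta)$ ranging over all such equations. First observe that when $S_0=\{j_0\}$ the identity $\delta_0=1-z_{j_0}$ yields the equation $z_{j_0}+\delta_0=1$, which is exactly \eqref{zjdile1} with $i=0$ together with \eqref{zjdige1}; so that direction is immediate, and it remains to show that this is the \emph{only} nontrivial equation when $|S_0|=1$ and that there is \emph{none} when $|S_0|>1$.

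The key structural device is to partition $J$ into the four atoms $S_0$, $S_1\setminus S_0$, $S_2\setminus S_0$, and $J\setminus S_3$, the first three of which are nonempty under the standing assumptions. Since we are in the easy case $\bDJDd=\Delta$ (see \eqref{set:bDelta}), for every $\bar\delta\in\Delta$ the fiber $\{z:(\bar\delta,z)\in\bXJDd\}$ is nonempty, and on it the putative equation reads $\alpha^\top z=\gamma-\beta^\top\bar\delta$, a constant. I would exploit the fiber over $(0,0,0,0)$, which consists of all $z$ having at least one $1$ on $S_0$, free coordinates on the other three atoms, and total in $[l,u]$. Two moves that stay inside this fiber pin down $\alpha$: (i) sum-preserving swaps of a $1$ and a $0$ between free coordinates, forcing $\alpha$ to be constant on $(S_1\setminus S_0)\cup(S_2\setminus S_0)\cup(J\setminus S_3)$; and (ii) incrementing a single free coordinate while holding the $S_0$-coordinates fixed, which changes the total and forces that common constant to be $0$.

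At this point the two cases diverge through $S_0$. If $|S_0|>1$, further swaps within $S_0$, and swaps exchanging an $S_0$ element with a free element (keeping a second $1$ on $S_0$ so the pattern stays $(0,0,0,0)$), show $\alpha$ is also $0$ on $S_0$; hence $\alpha=0$ and the equation degenerates to $\beta^\top\delta=\gamma$, valid on all of $\Delta$. As the five points of $\Delta$ are affinely independent and therefore affinely span $\R^4$, this forces $\beta=0$ and $\gamma=0$, so only the trivial equation survives and $\conv(\bXJDd)$ is full-dimensional. If instead $S_0=\{j_0\}$, no such swaps exist and only $\alpha_{j_0}$ may be nonzero, so $\alpha=a_0 e_{j_0}$ with $\alpha=0$ elsewhere. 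I would then note that $z_{j_0}$ is constant on each fiber (it equals $1-\delta_0$: forced to $0$ whenever some $\delta_i=1$ with $j_0\in S_i$, and to $1$ over $(0,0,0,0)$), so substituting $z_{j_0}=1-\delta_0$ reduces the equation to a pure $\delta$-equation on $\Delta$; affine independence again forces it to be a scalar multiple of $z_{j_0}+\delta_0-1=0$. Thus the affine hull is exactly the hyperplane $z_{j_0}+\delta_0=1$, of dimension $n+3$.

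The main obstacle is the feasibility bookkeeping for the swap and increment moves: each move must land on an integer point whose cardinality lies in $[l,u]$ and whose $\delta$-pattern stays $(0,0,0,0)$. I would discharge this uniformly from the hypotheses — $|S_1\setminus S_0|,|S_2\setminus S_0|\ge 1$, $|S_0|\ge 1$ (resp.\ $\ge 2$), $u\ge 2$, $u\ge l+1$, $u\le n$, and $|J\setminus S_3|=n-|S_3|\ge l$ (from $|S_3|\le n-l$), together with $|S_3|\ge 3$ giving $l\le n-3$ — which guarantee that the attainable totals, with the prescribed coordinates held fixed, form an interval meeting $[l,u]$ in at least the one or two values each move requires. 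Checking that these interval intersections are nonempty (and contain two values for the increment move) is the only genuinely technical part; everything else is linear algebra over the explicit point set $\Delta$.
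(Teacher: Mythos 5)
Your proof is correct and follows essentially the same route as the paper's: both work on the $\delta=(0,0,0,0)$ fiber (feasible points anchored by a $1$ on $S_0$), use single-coordinate moves among such points to force $\alpha$ to vanish off $S_0$, and then invoke the affine independence of the five points of $\Delta$ to pin down $\beta$ and $\gamma$, splitting into the cases $|S_0|\ge 2$ and $|S_0|=1$. The paper's bookkeeping is slightly leaner---it uses pairs of sets $Q_j\supseteq\{j,t\}$ of cardinality $u$ and $Q_j\setminus\{j\}$ of cardinality $u-1$ in place of your swap and increment moves---but the substance is identical.
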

\begin{proof}
	Let $n_i:=|S_i|$ for $i\in\{0,1,2,3\}$. Then $n_1+n_2=n_0+n_3$. Without loss of generality, assume that $S_1=\{1,2,\ldots,n_1\}$ and $S_2=\{n_1-n_0+1,n_1-n_0+2,\ldots,n_3 \}$. Then $S_0=\{n_1-n_0+1,\ldots,n_1 \}$. Let $\alpha^Tz+\beta^T\delta=\gamma$ be an equality that is satisfied by all $(\delta,z)\in\bXJDd$.
	
	Given any $Q\subseteq J$ with $l\leq|Q|\leq u$, we define the point $(\delta^Q,z^Q)\in\bXJDd$ such that\begin{displaymath}
	z_j^Q=\left\{\begin{array}{ll}
	1, &\text{if }j\in Q\\
	0, &\text{otherwise}
	\end{array}\right.,~~~~~~~~~\delta_i^Q=\prod_{j\in S_i}(1-z_j^Q).
	\end{displaymath}
	
	Fix $t\in S_0$. For each $j\in J\setminus\{t\}$, let $Q_j$ be a subset of $J$ such that $\{j,t\}\subseteq Q_j$ and $|Q_j|=u$. Then $\delta^{Q_j}=\mathbf{0}$ since $t\in Q_j\cap S_0$. By definition of $(\alpha,\beta,\gamma)$, we have $\alpha^Tz^{Q_j}+\beta^T\mathbf{0}=\gamma$. Define $\bar{U}_j=Q_j\setminus\{j\}$. Then $|\bar{U}_j|=u-1\in[l,u]$ and $\delta^{\bar{U}_j}=\mathbf{0}$. Therefore, $\alpha^Tz^{\bar{U}_j}+\beta^T\mathbf{0}=\gamma$. Together with $\alpha^Tz^{Q_j}+\beta^T\mathbf{0}=\gamma$, we have $\alpha_j=0$ for each $j\in J\setminus\{t\}$. Then $\alpha_{t}=\alpha^Tz^{Q_j}+\beta^T\mathbf{0}=\gamma$. We consider two separate cases depending on the value of $|S_0|$:\begin{enumerate}
		\item If $|S_0|=n_0\geq 2$, then letting $t=n_1$ and $t=n_1-1$ in the above construction implies $\alpha=\mathbf{0}$. Since $\conv(\bDJDd)=\proj_{\delta}\conv(\bXJDd)$ is a full-dimensional simplex, no equality of the form $\beta^T\delta=\gamma$ with nonzero $(\beta,\gamma)$ can be valid. Therefore, $\conv(\bXJDd)$ is full-dimensional if $|S_0|\geq 2$.
		\item If $|S_0|=1$, then we can only have $t=j_0=n_1$. Equality $\alpha^Tz+\beta^T\delta=\gamma$ becomes $\gamma z_{j_0}+\beta^T\delta=\gamma$. Since $\conv(\bDJDd)=\proj_{\delta}\conv(\bXJDd)$ is full-dimensional, there can be at most one nonzero equality (up to nonzero scaling) of the form $z_{j_0}+\beta^T\delta=1$ that defines the affine hull of $\conv(\bXJDd)$. This nonzero equality can only be $z_{j_0}+\delta_0=1$.
	\end{enumerate}
\end{proof}

By Lemma \ref{lem:simplex}, 
inequalities \eqref{d0le1}-\eqref{dineq} define all facets of $\conv(\bDJDd)=\conv(\Delta)$, which are type-1 inequalities of $\conv(\bXJDd)$. By Theorem \ref{thm:facet}, it remains to describe all type-2 inequalities of $\conv(\bXJDd)$. 

\subsection{Type-2 inequalities with $\alpha\ge0$}\label{sec:typege0}
Before presenting the type-2 inequalities, we list some valid inequalities that are implied by the standard linearization and inequalities \eqref{eq:ineq1}-\eqref{eq:ineq5}. These inequalities are used later for describing all type-2 inequalities with $\alpha\geq 0$.
\begin{lem}\label{lem:redundant_ineq}
	Assume $S_0\neq\emptyset$ and $|S_3|\leq n-l$. Then for any $Q\subseteq J$, the following inequalities are implied by inequalities \eqref{cardconu}-\eqref{zjge0} and \eqref{eq:ineq1}-\eqref{eq:ineq5}:
	\begin{align}
		\sum_{j\in Q}z_j+(|Q\cap S_0|)\delta_0+(|Q\cap(S_1\setminus S_0)|)\delta_1+(|Q\cap(S_2\setminus S_0)|)\delta_2\leq&~|Q|,\label{redundant1u}\\
		\sum_{j\in Q}z_j\leq&~u,\label{redundant2u}\\
		\sum_{j\in Q\setminus(S_i\setminus S_0)}z_j+\delta_0-\delta_i\leq&~u, &&i\in\{1,2\},\label{redundant3u}\\
		\sum_{j\in Q\setminus(S_3\setminus S_0)}z_j+2\delta_0-\delta_1-\delta_2+(u-|Q\setminus S_3|)\delta_3\leq&~u, &&\text{if }|Q\setminus S_3|\leq u,\label{redundant4u}\\
		\sum_{j\in Q\setminus(S_3\setminus S_0)}z_j+2\delta_0-\delta_1-\delta_2\leq&~u,\label{redundant5u}\\
		\sum_{j\in Q\setminus S_3}z_j+\delta_0-\delta_1-\delta_2+(u-|Q\setminus S_3|)\delta_3\leq&~u-1, &&\text{if }|Q\setminus S_3|\leq u.\label{redundant6u}
	\end{align}
\end{lem}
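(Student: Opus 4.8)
The plan is to prove each of the six inequalities \eqref{redundant1u}--\eqref{redundant6u} by exhibiting an explicit nonnegative (in fact, unit-coefficient) aggregation of the inequalities \eqref{cardconu}--\eqref{zjge0} together with \eqref{eq:ineq1}--\eqref{eq:ineq5}. The recurring device is twofold: to produce the $z$-part of the right-hand side I partition the relevant index set ($Q$, or $Q\cup S_3$) and sum the appropriate upper-bound constraints; to handle the $\delta$-differences $\delta_0-\delta_i$ and $2\delta_0-\delta_1-\delta_2$ I use the 2-link inequalities \eqref{2link} to trade each difference for a sum of $z_j$ over $S_i\setminus S_0$. Throughout I use that $S_3\setminus S_0$ is the disjoint union $(S_1\setminus S_0)\cup(S_2\setminus S_0)$ and $S_3$ is the disjoint union $S_0\cup(S_1\setminus S_0)\cup(S_2\setminus S_0)$; these partitions are exactly what make the $z$-coefficients collapse onto the sets appearing in the statement.

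The four ``easy'' inequalities come out directly. For \eqref{redundant1u} I sum, over each $j\in Q$, the bound $z_j+\delta_i\le 1$ from \eqref{zjdile1} when $j$ lies in $S_0$, $S_1\setminus S_0$, or $S_2\setminus S_0$ (charging $\delta_0$, $\delta_1$, or $\delta_2$ respectively), and the plain bound $z_j\le 1$ from \eqref{zjle1} for $j\in Q\setminus S_3$; the number of inequalities summed is $|Q|$, giving the right-hand side. For \eqref{redundant2u} I write $\sum_{j\in Q}z_j=\sum_{j\in J}z_j-\sum_{j\in J\setminus Q}z_j$ and bound the pieces by \eqref{cardconu} and \eqref{zjge0}. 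For \eqref{redundant3u} I add \eqref{2link} for the relevant $i$ to the bound $\sum_{j\in Q\cup(S_i\setminus S_0)}z_j\le u$ (which is \eqref{redundant2u}, equivalently \eqref{cardconu} with \eqref{zjge0}), and for \eqref{redundant5u} I add both 2-link inequalities to $\sum_{j\in Q\cup(S_3\setminus S_0)}z_j\le u$; in each case the 2-link inequality converts the $\delta$-difference into exactly the missing $z_j$ terms.

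The inequality \eqref{redundant4u} additionally carries the term $(u-|Q\setminus S_3|)\delta_3$, so here I combine the two 2-link inequalities with \eqref{eq:ineq5} instantiated at $Q\cup(S_3\setminus S_0)$. Since $(Q\cup(S_3\setminus S_0))\setminus S_3=Q\setminus S_3$, the hypothesis $|Q\setminus S_3|\le u$ guarantees that \eqref{eq:ineq5} applies and supplies precisely the $\delta_3$-coefficient and right-hand side required, while the two 2-link inequalities account for $2\delta_0-\delta_1-\delta_2$.

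The main obstacle, and the one case needing genuine care, is \eqref{redundant6u}, whose right-hand side is $u-1$ rather than $u$. The extra $-1$ cannot be produced by any upper-bound or 2-link inequality alone; it must come from the ``lower'' linearization constraint \eqref{zjdige1}, namely $-\delta_0-\sum_{j\in S_0}z_j\le -1$. The plan is to add the two 2-link inequalities and \eqref{zjdige1}: this reduces $2\delta_0-\delta_1-\delta_2-\delta_0$ to $\delta_0-\delta_1-\delta_2$, leaves $-\sum_{j\in S_3}z_j$ on the left, and contributes the $-1$ on the right. Adding \eqref{eq:ineq5} instantiated at $Q\cup S_3$ (valid since $(Q\cup S_3)\setminus S_3=Q\setminus S_3$ and $|Q\setminus S_3|\le u$) then restores the $z$-sum to $\sum_{j\in Q\setminus S_3}z_j$, produces the coefficient $(u-|Q\setminus S_3|)$ on $\delta_3$, and yields right-hand side $u-1$. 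Checking that the $z$-coefficients collapse correctly via the disjoint partition of $S_3$ and that $\delta_0$ cancels down to coefficient $+1$ is the only delicate bookkeeping; once the multipliers are fixed, every derivation reduces to a routine term-by-term verification.
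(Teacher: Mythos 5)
Your proof is correct and follows essentially the same route as the paper's: inequality \eqref{redundant1u} by summing \eqref{zjdile1} and \eqref{zjle1} over a partition of $Q$, inequalities \eqref{redundant2u}, \eqref{redundant3u} and \eqref{redundant5u} from \eqref{cardconu}, \eqref{zjge0} and the 2-link inequalities \eqref{2link}, and inequalities \eqref{redundant4u} and \eqref{redundant6u} by adding the 2-link inequalities (plus \eqref{zjdige1} for the $-1$ in \eqref{redundant6u}) to \eqref{eq:ineq5} instantiated at $Q\cup(S_3\setminus S_0)$ and $Q\cup S_3$, respectively. The unit-coefficient aggregations you exhibit are exactly the term-by-term decompositions used in the paper's proof.
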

\begin{proof}
	We can rewrite inequality \eqref{redundant1u} as\begin{multline*}
	\sum_{j\in Q}z_j+(|Q\cap S_0|)\delta_0+(|Q\cap(S_1\setminus S_0)|)\delta_1+(|Q\cap(S_2\setminus S_0)|)\delta_2\\
	=\sum_{j\in Q\cap S_0}\underbrace{(z_j+\delta_0)}_{\leq 1}+\sum_{j\in Q\cap(S_1\setminus S_0)}\underbrace{(z_j+\delta_1)}_{\leq 1}+\sum_{j\in Q\cap(S_2\setminus S_0)}\underbrace{(z_j+\delta_2)}_{\leq 1}+\sum_{j\in Q\setminus S_3}\underbrace{z_j}_{\leq 1}\leq|Q|,
	\end{multline*}
	which is implied by inequalities \eqref{zjdile1} and \eqref{zjle1}. 
	
	Inequality \eqref{redundant2u} is trivially implied by \eqref{cardconu} and \eqref{zjge0}. 
	Inequality \eqref{redundant3u} can be derived from inequalities \eqref{cardconu}, \eqref{2link} and \eqref{zjge0} as\begin{displaymath}
	\sum_{j\in Q\setminus(S_i\setminus S_0)}z_j+\delta_0-\delta_i=\underbrace{\sum_{j\in J}z_j}_{\leq u}-\sum_{j\in J\setminus (Q\cup(S_i\setminus S_0))}\underbrace{z_j}_{\geq 0}+\underbrace{\delta_0-\delta_i-\sum_{j\in S_i\setminus S_0}z_j}_{\leq 0}\leq u.
	\end{displaymath}
	Inequality \eqref{redundant4u} can be rewritten as\begin{multline*}
	\sum_{j\in Q\setminus(S_3\setminus S_0)}z_j+2\delta_0-\delta_1-\delta_2+(u-|Q\setminus S_3|)\delta_3\\
	=\underbrace{\sum_{j\in Q\cup(S_3\setminus S_0)}z_j+(u-|Q\setminus S_3|)\delta_3}_{\leq u}+\sum_{i=1}^2\underbrace{(\delta_0-\delta_i-\sum_{j\in S_i\setminus S_0}z_j)}_{\leq 0}\leq u,
	\end{multline*}
	which is implied by \eqref{2link} and \eqref{eq:ineq5} with $Q$ replaced by $Q\cup(S_3\setminus S_0)$.
	
	Inequality \eqref{redundant5u} can be rewritten as\begin{displaymath}
	\sum_{j\in Q\setminus(S_3\setminus S_0)}z_j+2\delta_0-\delta_1-\delta_2=\underbrace{\sum_{j\in J}z_j}_{\leq u}-\sum_{j\in J\setminus (Q\cup (S_3\setminus S_0))}\underbrace{z_j}_{\geq 0}+\sum_{i=1}^2\underbrace{(\delta_0-\delta_i-\sum_{j\in S_i\setminus S_0}z_j)}_{\leq 0}\leq u,
	\end{displaymath}
	which is implied by \eqref{cardconu}, \eqref{2link} and \eqref{zjge0}.
	
	Inequality \eqref{redundant6u} can be rewritten as\begin{multline*}
	\sum_{j\in Q\setminus S_3}z_j+\delta_0-\delta_1-\delta_2+(u-|Q\setminus S_3|)\delta_3\\
	=\underbrace{\sum_{j\in Q\cup S_3}z_j+(u-|Q\setminus S_3|)\delta_3}_{\leq u}+\sum_{i=1}^2\underbrace{(\delta_0-\delta_i-\sum_{j\in S_i\setminus S_0}z_j)}_{\leq 0}-\underbrace{(\delta_0+\sum_{j\in S_0}z_j)}_{\geq 1}\leq u-1,
	\end{multline*}
	which is implied by \eqref{zjdige1}, \eqref{2link} and \eqref{eq:ineq5} with $Q$ replaced by $Q\cup S_3$.
\end{proof}

We are now ready to give all type-2 inequalities with $\alpha\geq 0$.

\begin{lem}\label{lem:type2ge0}
	Assume $S_0\neq\emptyset$ and $|S_3|\leq n-l$ and let $\alpha^Tz+\beta^T\delta\leq\gamma$ be a facet-defining type-2 inequality for $\conv(\bXJDd)$. If $\alpha\in\{0,1\}^n$, then the inequality is implied by inequalities \eqref{cardconu}-\eqref{zjge0} and \eqref{eq:ineq1}-\eqref{eq:ineq5}.
\end{lem}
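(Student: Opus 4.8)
The plan is to parametrize the inequality by its support and use the tightness conditions of Theorem~\ref{thm:facet}(ii) to pin down every coefficient, then match the outcome to one of the inequalities supplied by Lemma~\ref{lem:redundant_ineq}. Write $Q:=\{j\in J:\alpha_j=1\}$, so that the inequality reads $\sum_{j\in Q}z_j+\beta^T\delta\leq\gamma$. Since we are in the case $\bDJDd=\Delta$ of \eqref{set:bDelta}, part (ii) of Theorem~\ref{thm:facet} applies at all five points of $\Delta$ and forces, for each $\bar\delta\in\Delta$, the identity $\gamma-\beta^T\bar\delta=M(\bar\delta)$, where $M(\bar\delta):=\max\{\sum_{j\in Q}z_j:(\bar\delta,z)\in\bXJDd\}$. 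Solving this (triangular) linear system gives $\gamma=M(\mathbf{0})$, $\beta_0=M(\mathbf{0})-M(1,0,0,0)$, $\beta_1=M(1,0,0,0)-M(1,1,0,0)$, $\beta_2=M(1,0,0,0)-M(1,0,1,0)$, and $\beta_3=M(1,1,0,0)+M(1,0,1,0)-M(1,0,0,0)-M(\mathbf{1})$, so it suffices to evaluate the five maxima.

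Each maximum is a small combinatorial optimization: to realize a prescribed $\bar\delta$ one sets $z_j=0$ on every $S_i$ with $\bar\delta_i=1$, places at least one witness coordinate equal to $1$ in each region on which some $\bar\delta_i$ is forced to $0$, and is otherwise free to set $z_j=1$ subject to $\sum_{j\in J}z_j\leq u$ (the lower bound $l$ is never binding for maximizing $\alpha^Tz$ with $\alpha\geq0$, since the $\geq l$ spare coordinates outside $S_3$ guaranteed by $|S_3|\leq n-l$ can absorb it). In the generic situation, where $Q$ meets each of $S_0$, $S_1\setminus S_0$, $S_2\setminus S_0$ and no cap is active ($|Q|\leq u$), the maxima are the plain counts $M(\mathbf{0})=|Q|$, $M(1,0,0,0)=|Q\setminus S_0|$, $M(1,1,0,0)=|Q\setminus S_1|$, $M(1,0,1,0)=|Q\setminus S_2|$, $M(\mathbf{1})=|Q\setminus S_3|$. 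Substituting these yields $\beta_0=|Q\cap S_0|$, $\beta_i=|Q\cap(S_i\setminus S_0)|$ for $i\in\{1,2\}$, $\beta_3=0$, $\gamma=|Q|$, i.e.\ exactly inequality~\eqref{redundant1u}, which is implied by \eqref{cardconu}-\eqref{zjge0} and \eqref{eq:ineq1}-\eqref{eq:ineq5} by Lemma~\ref{lem:redundant_ineq}.

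The remaining work is a case analysis over the two sources of non-generic behavior, and this is where I expect the main obstacle to lie. First, when the cardinality cap truncates a count: if the cap bites only at the origin ($|Q|>u\geq|Q\setminus S_0|$) then $M(\mathbf{0})=u$, giving $\gamma=u$ and $\beta_0=u-|Q\setminus S_0|$, which is precisely \eqref{eq:ineq1}; if instead $|Q\setminus S_0|>u$ then $M(1,0,0,0)=u$ and one computes $\beta_3=|Q\setminus S_0|-u>0$, $\beta_i=u-|Q\setminus S_i|$, $\beta_0=0$, $\gamma=u$, i.e.\ exactly \eqref{eq:ineq2}; truncations at the other three points likewise produce \eqref{eq:ineq3}-\eqref{eq:ineq5} and the forms \eqref{redundant4u}-\eqref{redundant5u}. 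Second, when $Q$ misses one of the regions $S_0$, $S_1\setminus S_0$ or $S_2\setminus S_0$, a witness one must be spent there, which lowers the corresponding maximum by one exactly when the cap is active; these cases generate the right-hand side $u-1$ and the $\delta_0-\delta_i$ patterns of \eqref{redundant3u} and \eqref{redundant6u}. For each combination I would read off $(\beta,\gamma)$ from the formulas of the first paragraph, recognize the inequality as one of \eqref{redundant1u}-\eqref{redundant6u} or directly as one of \eqref{eq:ineq1}-\eqref{eq:ineq5}, and conclude with Lemma~\ref{lem:redundant_ineq}. The delicate part is the bookkeeping of the maxima when the truncation and missing-witness effects occur simultaneously, together with verifying that the resulting coefficient vector coincides \emph{exactly} with the claimed inequality rather than merely being dominated by it.
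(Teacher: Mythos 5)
Your setup is the same as the paper's: define $Q$ (the paper calls it $T$), use the tightness system \eqref{type2def} at the five points of $\Delta$ to solve a triangular system for $(\beta,\gamma)$ in terms of the five maxima, and then match the result against Lemma \ref{lem:redundant_ineq} and \eqref{eq:ineq1}-\eqref{eq:ineq5} by a case analysis on which regions $Q$ meets and where the cap $u$ truncates. That part of the plan is sound, and your formulas for $\beta_0,\beta_1,\beta_2,\beta_3$ are correct.

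However, there is a genuine gap exactly at the point you flag as ``the delicate part,'' and it does not resolve in the way your plan requires. Consider the case $Q\cap S_0=\emptyset$, $|Q|\geq u$, with $Q$ meeting both $S_1\setminus S_0$ and $S_2\setminus S_0$. Then $M(\mathbf{0})=u-1$ (a witness coordinate must be spent inside $S_0$ while the cap is active), whereas $M(1,0,0,0)=u$ (no witness in $S_0$ is needed and $|Q\setminus S_0|=|Q|\geq u$), so your own formulas give $\gamma=u-1$ and $\beta_0=-1$. No inequality among \eqref{redundant1u}-\eqref{redundant6u} or \eqref{eq:ineq1}-\eqref{eq:ineq5} has a negative coefficient on $\delta_0$, so the coefficient vector coincides with \emph{none} of them, and the strategy ``recognize the inequality in the list'' cannot close this case: here one must prove implication by a nonnegative combination rather than exact coincidence. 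This is precisely where the paper introduces an extra idea. When $Q\cap S_0=\emptyset$ and $|Q|\geq u$, it forms the auxiliary inequality \eqref{type2:u},
\begin{equation*}
\sum_{j\in Q\cup S_0}z_j+(\beta_0+1)\delta_0+\beta_1\delta_1+\beta_2\delta_2+\beta_3\delta_3\leq \gamma+1=u,
\end{equation*}
observes that it is itself a valid type-2 inequality (its data satisfy the system \eqref{eq:type2u} for the set $Q\cup S_0$), that it falls under the already-settled case $t_0=0$ because $(Q\cup S_0)\cap S_0\neq\emptyset$, and that adding the standard linearization inequality \eqref{zjdige1}, i.e.\ $1-\delta_0-\sum_{j\in S_0}z_j\leq 0$, to it recovers the original inequality $\alpha^Tz+\beta^T\delta\leq\gamma$. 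Without this lifting-and-summing step your case analysis cannot be completed, so the proposal as written does not yield a full proof.
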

\begin{proof}
	By Lemma \ref{lem:redundant_ineq}, inequalities \eqref{redundant1u}-\eqref{redundant6u} are implied by inequalities \eqref{cardconu}-\eqref{zjge0} and \eqref{eq:ineq1}-\eqref{eq:ineq5}. Therefore, we only need to show that inequality $\alpha^Tz+\beta^T\delta\leq\gamma$ is implied by inequalities \eqref{cardconu}-\eqref{zjge0}, \eqref{eq:ineq1}-\eqref{eq:ineq5} and \eqref{redundant1u}-\eqref{redundant6u}.
	Let $T$ denote the set $\{j\in J:\alpha_j=1 \}$ and let $t_0\in\{0,1\}$ denote if $T\cap S_0=\emptyset$ or not, and $t_i\in\{0,1\}$ denote if $T\cap (S_i\setminus S_0)=\emptyset$ or not for $i\in\{1,2\}$. More precisely,\begin{displaymath}
	t_0=\begin{cases}	1& \text{if }T\cap S_0=\emptyset\\0& \text{otherwise}	\end{cases},~~~t_1=\begin{cases}	1& \text{if }T\cap (S_1\setminus S_0)=\emptyset\\0& \text{otherwise}	\end{cases},~~~t_2=\begin{cases}	1& \text{if }T\cap (S_2\setminus S_0)=\emptyset\\0& \text{otherwise}	\end{cases}.
	\end{displaymath}
	According to the definition of type-2 inequalities, given \eqref{set:bDelta} we can write equations \eqref{type2def} as\begin{equation}\label{eq:type2u}\begin{aligned}
	\gamma=\min\{u-t_0,|T|\},\qquad&
	\gamma-\beta_0=\min\{u-t_1-t_2,|T\setminus S_0|\},\\
	\gamma-\beta_0-\beta_1=	\min\{u-t_2,|T\setminus S_1|\},\qquad&
	\gamma-\beta_0-\beta_2=
	\min\{u-t_1,|T\setminus S_2|\},\\
	\gamma-\beta_0-\beta_1-\beta_2-\beta_3&=\min\{u,|T\setminus S_3|\}.
	\end{aligned}
	\end{equation}

	We first consider the case when $T\cap S_0\neq\emptyset$, i.e., $t_0=0$. Without loss of generality, assume that $|T\cap(S_1\setminus S_0)|\geq |T\cap(S_2\setminus S_0)|$. Then $|T\setminus S_1|\leq|T\setminus S_2|$ and $t_1\leq t_2$. We now consider three subcases depending on the value of $t_1+t_2$ assuming $t_0=0$:
	
	{\bf  Case 1:} $t_0=t_1=t_2=0$. Depending on the cardinality, based on \eqref{eq:type2u}, inequality $\alpha^Tz+\beta^T\delta\leq\gamma$ becomes$$\begin{cases} 
		\text{inequality \eqref{redundant1u} with $\Sprime = T$ } & \text{if }u\geq|T|,\\
		\text{inequality \eqref{eq:ineq1} with $\Sprime =T$ } & \text{if } |T|>u\geq |T\setminus S_0|,\\
		\text{inequality \eqref{eq:ineq2} with $\Sprime =T$ } & \text{if } |T\setminus S_0|>u\geq|T\setminus S_2|,\\
		\text{inequality \eqref{eq:ineq4} with  $\Sprime =T,~ i=1$ } & \text{if } |T\setminus S_2|>u\geq |T\setminus S_1|,\\
		\text{inequality \eqref{eq:ineq5} with $\Sprime =T$ } & \text{if } |T\setminus S_1|>u\geq |T\setminus S_3|,\\
		\text{inequality \eqref{redundant2u} with $\Sprime=T$ } & \text{if }|T\setminus S_3|>u.
	\end{cases}$$
	{\bf Case 2: }$t_0=0$, $t_1+t_2=1$. Since $t_1\leq t_2$, in this case we have $t_1=0$ and $t_2=1$. Therefore, $T\cap(S_2\setminus S_0)=\emptyset$. Depending on the cardinality, based on \eqref{eq:type2u}, inequality $\alpha^Tz+\beta^T\delta\leq\gamma$ becomes$$\begin{cases}
	\text{inequality \eqref{redundant1u} with $\Sprime = T$ } & \text{if }u-1\geq|T|,\\
	\text{inequality \eqref{eq:ineq1} with $\Sprime =T$ } & \text{if }|T|\geq u\geq |T\setminus S_0|,\\
	\text{inequality \eqref{eq:ineq3} with $\Sprime =T,~ (i,k)=(2,1)$ } & \text{if }|T\setminus S_0|\geq u>|T\setminus S_1|,\\
	\text{inequality \eqref{redundant3u} with $\Sprime =T,~ i=2$ } & \text{if }|T\setminus S_1|\geq u.
	\end{cases}$$
	{\bf Case 3: }$t_0=0$, $t_1=t_2=1$. In this case, $T\cap(S_3\setminus S_0)=\emptyset$. Depending on the cardinality, based on \eqref{eq:type2u}, inequality $\alpha^Tz+\beta^T\delta\leq\gamma$ becomes$$\begin{cases}
	\text{inequality \eqref{redundant1u} with $\Sprime = T$ } & \text{if }u-1\geq|T|,\\
	\text{inequality \eqref{eq:ineq1} with $\Sprime =T$ } & \text{if }|T|\geq u\text{ and }|T\setminus S_0|\leq u-2,\\
	\text{inequality \eqref{redundant4u} with $\Sprime =T$ } & \text{if }|T\setminus S_0|=u-1,\\
	\text{inequality \eqref{redundant5u} with $\Sprime =T$ } & \text{if }|T\setminus S_0|\geq u.
	\end{cases}$$
	
	Next we consider the case when $T\cap S_0=\emptyset$. If $|T|=u-1$ and $T\cap S_3\neq \emptyset$, or if $|T|\leq u-2$, then by \eqref{eq:type2u} we have $\gamma=\gamma-\beta_0=|T|$, $\gamma-\beta_0-\beta_1=|T\setminus S_1|$, $\gamma-\beta_0-\beta_2=|T\setminus S_2|$, $\gamma-\beta_0-\beta_1-\beta_2-\beta_3=|T\setminus S_3|$. In this case, inequality $\alpha^Tz+\beta^T\delta\leq\gamma$ becomes inequality \eqref{redundant1u} with $\Sprime =T$. If $|T|=u-1$ and $T\cap S_3=\emptyset$, then inequality $\alpha^Tz+\beta^T\delta\leq\gamma$ becomes inequality \eqref{redundant6u} with $\Sprime =T$.
	\if0
	Inequalities \eqref{zjdile1} and \eqref{zjle1} imply that\begin{displaymath}
	\alpha^Tz+\beta^T\delta=\sum_{j\in T\cap S_1}\underbrace{(z_j+\delta_1)}_{\leq 1}+\sum_{j\in T\cap S_2}\underbrace{(z_j+\delta_2)}_{\leq 1}+\sum_{j\in T\setminus S_3}\underbrace{z_j}_{\leq 1} \leq|T|=\gamma.
	\end{displaymath}
	If $|T|=u-1$ and $T\cap S_3=\emptyset$, then\begin{displaymath}
	\sum_{j\in T}z_j+\delta_0-\delta_1-\delta_2+\delta_3=\alpha^Tz+\beta^T\delta\leq\gamma=u-1
	\end{displaymath}
	is implied by inequalities \eqref{zjdige1}, \eqref{2link} and \eqref{eq:ineq5} with $\Sprime =T\cup S_3$ as\begin{displaymath}
	\alpha^Tz+\beta^T\delta=\sum_{j\in T}z_j+\delta_0-\delta_1-\delta_2+\delta_3=\underbrace{(\sum_{j\in T\cup S_3}z_j+\delta_3)}_{\leq u}+\underbrace{(-\delta_0-\sum_{j\in S_0}z_j)}_{\leq -1}+\sum_{i=1}^2\underbrace{(\delta_0-\delta_i-\sum_{j\in S_i\setminus S_0}z_j)}_{\leq 0}\leq u-1=\gamma.
	\end{displaymath}
	\fi
	Otherwise, $|T|\geq u$.	Note that $(T,\beta,\gamma)$ satisfies equations in \eqref{eq:type2u}. By \eqref{eq:type2u} and Theorem \ref{thm:facet}, when $|T|\geq u$, the following inequality is a valid type-2 inequality\begin{equation}\label{type2:u}
	\sum_{j\in T\cup S_0}z_j+(\beta_0+1)\delta_0+\beta_1\delta_1+\beta_2\delta_2+\beta_3\delta_3\leq u=\gamma+1.
	\end{equation}
	Since $(T\cup S_0)\cap S_0\neq\emptyset$, \eqref{type2:u} is implied by \eqref{cardconu}-\eqref{zjge0} and \eqref{eq:ineq1}-\eqref{eq:ineq5} according to our discussion of cases when $t_0=0$.
	Inequality \eqref{type2:u} together with \eqref{zjdige1} implies $\alpha^Tz+\beta^T\delta\leq\gamma$. We thus conclude $\alpha^Tz+\beta^T\delta\leq\gamma$ is implied by \eqref{cardconu}-\eqref{zjge0} and \eqref{eq:ineq1}-\eqref{eq:ineq5}.
\end{proof}

\subsection{Type-2 inequalities with $\alpha\le0$}
Similar to Section \ref{sec:typege0}, in order to describe all type-2 inequalities with $\alpha\leq 0$, we first list some valid inequalities that are implied by standard linearization and inequalities \eqref{eq:ineq6}-\eqref{eq:ineq10}.
\begin{lem}\label{lem:redundant_ineq2}
	Assume $S_0\neq\emptyset$ and $|S_3|\leq n-l$. Then for any $Q\subseteq J$, the following inequalities are implied by inequalities \eqref{zjdile1}-\eqref{cardconl} and \eqref{eq:ineq6}-\eqref{eq:ineq10}:
	\begin{align}
		-\sum_{j\in Q}z_j+(|S_0\setminus Q|)\delta_0+(|S_1\setminus S_0\setminus Q|)\delta_1+(|S_2\setminus S_0\setminus Q|)\delta_2\leq&~n-|Q|-l,\label{redundant1l}\\
		-\sum_{j\in Q}z_j\leq&~0,\label{redundant2l}\\
		-\sum_{j\in Q\cup(S_i\setminus S_0)}z_j+\delta_0-\delta_i\leq&~0, &&i\in\{1,2\},\label{redundant3l}\\
		-\sum_{j\in Q\cup(S_3\setminus S_0)}z_j+2\delta_0-\delta_1-\delta_2+(l+|Q\cup S_3|-n)\delta_3\leq&~0, &&\text{if }|Q\cup S_3|\geq n-l,\label{redundant4l}\\
		-\sum_{j\in Q\cup(S_3\setminus S_0)}z_j+2\delta_0-\delta_1-\delta_2\leq&~0,\label{redundant5l}\\
		-\sum_{j\in Q\cup S_3}z_j+\delta_0-\delta_1-\delta_2+(l+|Q\cup S_3|-n)\delta_3\leq&~-1, &&\text{if }|Q\cup S_3|\geq n-l.\label{redundant6l}
	\end{align}
\end{lem}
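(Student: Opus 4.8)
The plan is to prove Lemma~\ref{lem:redundant_ineq2} exactly as the ``lower-bound dual'' of Lemma~\ref{lem:redundant_ineq}: for each of \eqref{redundant1l}--\eqref{redundant6l} I would rewrite its left-hand side as a sum of bracketed terms, each bounded by a single cited inequality with a nonnegative weight. The building blocks available are $-z_j\le 0$ from \eqref{zjge0}, $z_j\le 1$ from \eqref{zjle1}, the cardinality bound from \eqref{cardconl}, the linearization bounds $z_j+\delta_i\le 1$ from \eqref{zjdile1} and $\delta_0+\sum_{j\in S_0}z_j\ge 1$ from \eqref{zjdige1}, the 2-link inequalities from \eqref{2link}, and the single type-2 inequality \eqref{eq:ineq10}. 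A fact I would use repeatedly is that $S_3\setminus S_0$ is the disjoint union of $S_1\setminus S_0$ and $S_2\setminus S_0$, so that summing the two 2-link inequalities contributes precisely $2\delta_0-\delta_1-\delta_2-\sum_{j\in S_3\setminus S_0}z_j$.

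I would first dispose of the inequalities needing only standard-linearization terms. Inequality \eqref{redundant2l} is simply $|Q|$ copies of \eqref{zjge0}. For \eqref{redundant3l} I write the left-hand side as $(\delta_0-\delta_i-\sum_{j\in S_i\setminus S_0}z_j)-\sum_{j\in Q\setminus(S_i\setminus S_0)}z_j$, namely one copy of \eqref{2link} plus copies of \eqref{zjge0}; inequality \eqref{redundant5l} is identical but uses both 2-link inequalities. For \eqref{redundant1l} I apply $\delta_i\le 1-z_j$ (that is \eqref{zjdile1}) once for each $j$ in $S_0\setminus Q$, $(S_1\setminus S_0)\setminus Q$ and $(S_2\setminus S_0)\setminus Q$; since these three sets partition $S_3\setminus Q$, the $\delta$-terms are bounded by $|S_3\setminus Q|-\sum_{j\in S_3\setminus Q}z_j$, leaving $-\sum_{j\in Q\cup S_3}z_j+|S_3\setminus Q|$, which is at most $n-|Q\cup S_3|-l+|S_3\setminus Q|=n-|Q|-l$ by \eqref{cardconl} and \eqref{zjle1}.

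The only inequalities that genuinely invoke \eqref{eq:ineq10} are \eqref{redundant4l} and \eqref{redundant6l}, the mirror images of \eqref{redundant4u} and \eqref{redundant6u}. For \eqref{redundant4l} I add the two 2-link inequalities to \eqref{eq:ineq10} with $Q$ replaced by $Q\setminus(S_3\setminus S_0)$: the $z$-sum from \eqref{eq:ineq10} and the $-\sum_{j\in S_3\setminus S_0}z_j$ from the 2-link terms recombine into $-\sum_{j\in Q\cup(S_3\setminus S_0)}z_j$, while the $\delta_3$-coefficient is unchanged because $(Q\setminus(S_3\setminus S_0))\cup S_3=Q\cup S_3$. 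For \eqref{redundant6l} I do the same with $Q$ replaced by $Q\setminus S_3$ and additionally subtract one copy of \eqref{zjdige1}, i.e.\ use $-(\delta_0+\sum_{j\in S_0}z_j)\le -1$; the three $z$-sums then recombine into $-\sum_{j\in Q\cup S_3}z_j$ and produce the required right-hand side $-1$. In both cases the activation condition $|Q\cup S_3|\ge n-l$ matches the hypothesis of \eqref{eq:ineq10} after substitution, since $|(Q\setminus(S_3\setminus S_0))\cup S_3|=|(Q\setminus S_3)\cup S_3|=|Q\cup S_3|$.

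The only real work is the set-algebra bookkeeping, and that is where I expect the main obstacle to lie: one must choose, for each target, the correct argument $Q'$ in the substituted copy of \eqref{eq:ineq10} (or the correct index sets for \eqref{zjdile1}) so that, once the $-\sum_{j\in S_3\setminus S_0}z_j$ introduced by the 2-link inequalities is absorbed, the $z$-indices coincide exactly with those of the target and every $\delta$-coefficient matches. The guiding principle is the complementation duality $z_j\mapsto 1-z_j$ relating the two lemmas, which predicts that wherever the proof of Lemma~\ref{lem:redundant_ineq} replaces $Q$ by a union (such as $Q\cup(S_3\setminus S_0)$ or $Q\cup S_3$ in \eqref{eq:ineq5}), the present proof replaces $Q$ by the corresponding difference ($Q\setminus(S_3\setminus S_0)$ or $Q\setminus S_3$ in \eqref{eq:ineq10}); checking that this substitution leaves $|Q'\cup S_3|$ invariant, so that both the $\delta_3$-coefficient and the side condition survive, is the one point requiring care. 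Once the decompositions are written out, each bracketed term is nonnegatively weighted and bounded by a cited inequality, so validity follows immediately.
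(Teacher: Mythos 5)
Your proposal is correct and takes essentially the same approach as the paper's proof: each of \eqref{redundant1l}--\eqref{redundant6l} is expressed as a nonnegatively weighted combination of \eqref{zjdile1}--\eqref{cardconl}, the 2-link inequalities \eqref{2link}, and \eqref{eq:ineq10}, with exactly the paper's substitutions ($Q$ replaced by $Q\setminus(S_3\setminus S_0)$ for \eqref{redundant4l} and by $Q\setminus S_3$ for \eqref{redundant6l}). The only cosmetic difference is in \eqref{redundant1l}, where the paper keeps the full sum $-\sum_{j\in J}z_j$ intact and bounds it directly by \eqref{cardconl} instead of passing through the intermediate term $-\sum_{j\in Q\cup S_3}z_j$; the certificates invoked are identical.
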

\begin{proof}
	We can rewrite inequality \eqref{redundant1l}  as\begin{multline*}
		-\sum_{j\in Q}z_j+(|S_0\setminus Q|)\delta_0+(|S_1\setminus S_0\setminus Q|)\delta_1+(|S_2\setminus S_0\setminus Q|)\delta_2\\
		=-\underbrace{\sum_{j\in J}z_j}_{\geq l}+\sum_{j\in S_0\setminus Q}\underbrace{(z_j+\delta_0)}_{\leq 1}+\sum_{j\in S_1\setminus S_0\setminus Q}\underbrace{(z_j+\delta_1)}_{\leq 1}+\sum_{j\in S_2\setminus S_0\setminus Q}\underbrace{(z_j+\delta_2)}_{\leq 1}+\sum_{j\in J\setminus(Q\cup S_3)}\underbrace{z_j}_{\leq 1}
		\leq n-|Q|-l,
	\end{multline*}
	which is implied by inequalities \eqref{zjdile1}, \eqref{zjle1} and \eqref{cardconl}. Inequality  \eqref{redundant2l} is trivially implied by  \eqref{zjge0}.
	
	Inequality \eqref{redundant3l} can be derived from inequalities \eqref{2link} and \eqref{zjge0} as\begin{displaymath}
		-\sum_{j\in Q}z_j+\delta_0-\delta_i=-\sum_{j\in Q\setminus(S_i\setminus S_0)}\underbrace{z_j}_{\geq 0}+\underbrace{\delta_0-\delta_i-\sum_{j\in S_i\setminus S_0}z_j}_{\leq 0}\leq 0.
	\end{displaymath}
	Inequality \eqref{redundant4l} can be rewritten as\begin{multline*}
		-\sum_{j\in Q\cup(S_3\setminus S_0)}z_j+2\delta_0-\delta_1-\delta_2+(l+|Q\cup S_3|-n)\delta_3\\
		=\underbrace{-\sum_{j\in Q\setminus(S_3\setminus S_0)}z_j+(l+|Q\cup S_3|-n)\delta_3}_{\leq 0}+\sum_{i=1}^2\underbrace{(\delta_0-\delta_i-\sum_{j\in S_i\setminus S_0}z_j)}_{\leq 0}\leq 0,
	\end{multline*}
	which is implied by \eqref{2link} and \eqref{eq:ineq10} with $Q$ replaced by $Q\setminus(S_3\setminus S_0)$.
	
	Inequality \eqref{redundant5l} can be rewritten as\begin{displaymath}
		-\sum_{j\in Q\cup(S_3\setminus S_0)}z_j+2\delta_0-\delta_1-\delta_2=-\sum_{j\in Q\setminus(S_3\setminus S_0)}\underbrace{z_j}_{\geq 0}+\sum_{i=1}^2\underbrace{(\delta_0-\delta_i-\sum_{j\in S_i\setminus S_0}z_j)}_{\leq 0}\leq 0,
	\end{displaymath}
	which is implied by \eqref{2link} and \eqref{zjge0}.

	Inequality \eqref{redundant6l} can be rewritten as\begin{multline*}
		-\sum_{j\in Q\cup S_3}z_j+\delta_0-\delta_1-\delta_2+(l+|Q\cup S_3|-n)\delta_3\\
		=\underbrace{-\sum_{j\in Q\setminus S_3}z_j+(l+|Q\cup S_3|-n)\delta_3}_{\leq 0}+\sum_{i=1}^2\underbrace{(\delta_0-\delta_i-\sum_{j\in S_i\setminus S_0}z_j)}_{\leq 0}-\underbrace{(\delta_0+\sum_{j\in S_0}z_j)}_{\geq 1}\leq-1,
	\end{multline*}
	which is implied by \eqref{zjdige1}, \eqref{2link} and \eqref{eq:ineq10} with $Q$ replaced by $Q\setminus S_3$.
\end{proof}

We now apply Lemma \ref{lem:redundant_ineq2} to show that all type-2 inequalities with $\alpha\leq 0$ are implied by inequalities \eqref{zjdile1}-\eqref{cardconl} and \eqref{eq:ineq6}-\eqref{eq:ineq10}.

\begin{lem}\label{lem:type2le0}
	Assume $S_0\neq\emptyset$ and $|S_3|\leq n-l$ and let $\alpha^Tz+\beta^T\delta\leq\gamma$ be a facet-defining type-2 inequality for $\conv(\bXJDd)$. If $\alpha\in\{0,-1\}^n$, then the inequality is implied by inequalities \eqref{zjdile1}-\eqref{cardconl} and \eqref{eq:ineq6}-\eqref{eq:ineq10}.
\end{lem}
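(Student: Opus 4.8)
The plan is to mirror the proof of Lemma~\ref{lem:type2ge0}, with the lower-bound inequalities \eqref{eq:ineq6}--\eqref{eq:ineq10} and the redundant inequalities \eqref{redundant1l}--\eqref{redundant6l} of Lemma~\ref{lem:redundant_ineq2} now playing the roles of their upper-bound counterparts. Write $T=\{j\in J:\alpha_j=-1\}$, so that $\alpha^Tz=-\sum_{j\in T}z_j$, and set $s_0=1$ if $S_0\subseteq T$ and $s_0=0$ otherwise, and for $i\in\{1,2\}$ set $s_i=1$ if $S_i\setminus S_0\subseteq T$ and $s_i=0$ otherwise. Since $\alpha\le 0$, the defining relation \eqref{type2def} reads $\gamma-\beta^T\bar\delta=-\min\{\sum_{j\in T}z_j:(\bar\delta,z)\in\bXJDd\}$ for each $\bar\delta\in\bDJDd=\Delta$. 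The first step is to evaluate these five minima: minimizing $\sum_{j\in T}z_j$ means placing the required $1$'s outside $T$ whenever possible, subject to $\sum_j z_j\ge l$ and to the ``at least one $1$'' requirements forced by the coordinates $\bar\delta_i=0$, and a forced $1$ must land in $T$ precisely when the corresponding set ($S_0$ or $S_i\setminus S_0$) is contained in $T$, which is recorded by the indicators $s_i$. Carrying this out over the five points listed in \eqref{set:bDelta} gives
\begin{align*}
\min_{(\bar\delta,z)\in\bXJDd}\sum_{j\in T}z_j=\;&\max\{s_0,\,l+|T|-n\} &&(\bar\delta=(0,0,0,0)),\\
=\;&\max\{s_1+s_2,\,l+|T\cup S_0|-n\} &&(\bar\delta=(1,0,0,0)),\\
=\;&\max\{s_2,\,l+|T\cup S_1|-n\} &&(\bar\delta=(1,1,0,0)),\\
=\;&\max\{s_1,\,l+|T\cup S_2|-n\} &&(\bar\delta=(1,0,1,0)),\\
=\;&\max\{0,\,l+|T\cup S_3|-n\} &&(\bar\delta=(1,1,1,1)).
\end{align*}
These five equations are the exact lower-bound analogue of \eqref{eq:type2u}; one can also read them off from \eqref{eq:type2u} through the complementation $w_j=1-z_j$, which turns the present minimization into a maximization of $\sum_{j\in T}w_j$ over the complementary cardinality-constrained set with bound $n-l$, together with $\min_z\sum_{j\in T}z_j=|T|-\max_w\sum_{j\in T}w_j$.

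The generic case is $s_0=0$. As in Lemma~\ref{lem:type2ge0} I would assume without loss of generality that $|(S_1\setminus S_0)\setminus T|\ge|(S_2\setminus S_0)\setminus T|$, so that $s_1\le s_2$ and $|T|\le|T\cup S_0|\le|T\cup S_2|\le|T\cup S_1|\le|T\cup S_3|$, and then split into three subcases according to $s_1+s_2\in\{0,1,2\}$, paralleling Cases~1--3 of Lemma~\ref{lem:type2ge0}. In each subcase the five equations determine $(\beta_0,\beta_1,\beta_2,\beta_3,\gamma)$ as explicit differences of the quantities $\min\{0,\,n-l-|T\cup S_i|\}$, and one identifies the resulting inequality by locating $n-l$ among the cardinalities above: for $s_0=s_1=s_2=0$ one obtains \eqref{redundant1l} when $n-l\le|T|$, then in turn \eqref{eq:ineq6}, \eqref{eq:ineq7}, \eqref{eq:ineq9}, \eqref{eq:ineq10} as $n-l$ passes each successive threshold, and finally \eqref{redundant2l} once $n-l>|T\cup S_3|$, while the subcases $s_1+s_2=1$ and $s_1+s_2=2$ additionally bring in \eqref{eq:ineq8} with \eqref{redundant3l}, and \eqref{redundant4l}--\eqref{redundant5l}, respectively (each taken with $Q=T$). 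Since by Lemma~\ref{lem:redundant_ineq2} the inequalities \eqref{redundant1l}--\eqref{redundant6l} are themselves implied by \eqref{zjdile1}--\eqref{cardconl} and \eqref{eq:ineq6}--\eqref{eq:ineq10}, this settles the claim whenever $s_0=0$.

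It remains to treat the special case $s_0=1$, i.e.\ $S_0\subseteq T$, which is the lower-bound mirror of the ``$T\cap S_0=\emptyset$'' case at the end of Lemma~\ref{lem:type2ge0}. When $|T|\ge n-l+1$ the first equation gives $\gamma=-(l+|T|-n)$, which is exactly the value it would take for $s_0=0$; hence all five equations coincide with those of the generic case and the inequality is again one of \eqref{eq:ineq6}--\eqref{eq:ineq10}, \eqref{redundant1l}, or (at the boundary $|T|=n-l+1$ with $S_3\subseteq T$) \eqref{redundant6l}, taken with $Q=T$. When $|T|\le n-l$ the first equation instead gives $\gamma=-1$, so the inequality is ``off by one'' from any generic one; here I would reduce to the generic case via the auxiliary set $\hat T=T\setminus S_0$. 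Because $S_0\subseteq T$ and $S_0\neq\emptyset$, the set $\hat T$ satisfies $s_0(\hat T)=0$ while $s_1$, $s_2$ and all the unions $|\hat T\cup S_i|$ are unchanged, so equations two through five are identical and only $\gamma$ and $\beta_0$ change, each increasing by exactly $1$. Thus the (already handled) generic inequality for $\hat T$ reads $-\sum_{j\in\hat T}z_j+(\beta_0+1)\delta_0+\beta_1\delta_1+\beta_2\delta_2+\beta_3\delta_3\le 0$, and adding to it the standard-linearization inequality \eqref{zjdige1}, written as $-\delta_0-\sum_{j\in S_0}z_j\le-1$, and using $T=\hat T\cup S_0$ with $\hat T\cap S_0=\emptyset$, yields precisely $\alpha^Tz+\beta^T\delta\le-1=\gamma$. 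This is the faithful mirror of the derivation of $\alpha^Tz+\beta^T\delta\le\gamma$ from \eqref{type2:u} and \eqref{zjdige1} in Lemma~\ref{lem:type2ge0}.

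The main obstacle is the special case $s_0=1$: one must identify the correct auxiliary set $\hat T=T\setminus S_0$ and verify that, in the regime $|T|\le n-l$, only $\gamma$ and $\beta_0$ shift (by exactly $1$, independently of $|S_0|$, because $l+|\hat T|-n\le-|S_0|<0$ forces the first $\max$ to equal $0$), so that a single application of \eqref{zjdige1} closes the gap. A secondary technical point is the justification of the five minima: each requires exhibiting a feasible $z$ with $(\bar\delta,z)\in\bXJDd$ attaining the stated value, which means checking not only $\sum_j z_j\ge l$ and the ``at least one $1$'' requirements but also that the minimizing configuration respects the upper bound $\sum_j z_j\le u$; the hypotheses $S_0\neq\emptyset$ and $|S_3|\le n-l$ guarantee $\bDJDd=\Delta$ and the required feasibility. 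Once the minima and this one reduction are in place, the remainder is routine case bookkeeping strictly parallel to Lemma~\ref{lem:type2ge0}.
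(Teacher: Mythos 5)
Your proposal is correct and follows essentially the same route as the paper's own proof: the same five type-2 equations (your $\max\{s_i,\cdot\}$ form is identical to the paper's $t_i+(\cdot-t_i)^+$ form), the same WLOG ordering and three-subcase analysis for $S_0\nsubseteq T$ matched to \eqref{eq:ineq6}--\eqref{eq:ineq10} and \eqref{redundant1l}--\eqref{redundant5l}, the same direct identification with \eqref{redundant1l} or \eqref{redundant6l} when $S_0\subseteq T$ and $|T|\geq n-l+1$, and the same reduction $\hat T=T\setminus S_0$ combined with \eqref{zjdige1} (the paper's inequality \eqref{type2:l}) when $|T|\leq n-l$. The only differences are cosmetic (notation $s_i$ versus $t_i$, and the optional complementation remark), so no gap to report.
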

\begin{proof}
	By Lemma \ref{lem:redundant_ineq2}, inequalities \eqref{redundant1l}-\eqref{redundant6l} are implied by inequalities \eqref{zjdile1}-\eqref{cardconl} and \eqref{eq:ineq6}-\eqref{eq:ineq10}. Therefore, we only need to show that inequality $\alpha^Tz+\beta^T\delta\leq\gamma$ is implied by inequalities \eqref{zjdile1}-\eqref{cardconl}, \eqref{eq:ineq6}-\eqref{eq:ineq10} and \eqref{redundant1l}-\eqref{redundant6l}. Let $T$ denote the set $\{j\in J:\alpha_j=-1 \}$ and let $t_0\in\{0,1\}$ denote if $S_0\subseteq T$, and $t_i\in\{0,1\}$ denote if $S_i\setminus S_0\subseteq T$ for $i\in\{1,2\}$. More precisely,\begin{displaymath}
	t_0=\begin{cases}	1& \text{if }S_0\subseteq T\\0& \text{otherwise}	\end{cases},~t_1=\begin{cases}	1& \text{if }S_1\setminus S_0\subseteq T\\0& \text{otherwise}	\end{cases},~t_2=\begin{cases}	1& \text{if }\text{if }S_2\setminus S_0\subseteq T\\0& \text{otherwise}	\end{cases}.
	\end{displaymath}
	According to the definition of type-2 inequalities, given \eqref{set:bDelta} we can write equations \eqref{type2def} as\begin{equation}\label{eq:type2l}
	\begin{aligned}
	\gamma=-t_0-(l-t_0-|J\setminus T|)^+=&-t_0-(l+|T|-n-t_0)^+,\\
	\gamma-\beta_0=-t_1-t_2-(l-t_1-t_2-|J\setminus S_0\setminus T|)^+=&-t_1-t_2-(l+|T\cup S_0|-n-t_1-t_2)^+,\\
	\gamma-\beta_0-\beta_1=-t_2-(l-t_2-|J\setminus S_1\setminus T|)^+=&-t_2-(l+|T\cup S_1|-n-t_2)^+,\\
	\gamma-\beta_0-\beta_2=-t_1-(l-t_1-|J\setminus S_2\setminus T|)^+=&-t_1-(l+|T\cup S_2|-n-t_1)^+,\\
	\gamma-\beta_0-\beta_1-\beta_2-\beta_3=-(l-|J\setminus S_3\setminus T|)^+=&-(l+|T\cup S_3|-n)^+.
	\end{aligned}
	\end{equation}
	Here $a^+$ denotes $\max\{0,a\}$ for $a\in \bR$.
	
	We first consider the case when $S_0\nsubseteq T$, i.e., $t_0=0$. Without loss of generality, assume that $|S_1\setminus S_0\setminus T|\geq|S_2\setminus S_0\setminus T|$. Then $|T\cup S_1|\geq|T\cup S_2|$ and $t_1\leq t_2$. We now consider three subcases depending on the value of $t_1+t_2$ assuming $t_0=0$:
	
	{\bf Case 1:} $t_0=t_1=t_2=0$. Depending on the cardinality, based on \eqref{eq:type2l}, inequality $\alpha^Tz+\beta^T\delta\leq\gamma$ becomes$$\begin{cases}  
	\text{inequality \eqref{redundant1l} with $\Sprime =T$ } & \text{if } n-l\leq |T|, \\
	\text{inequality \eqref{eq:ineq6} with $\Sprime =T$ } & \text{if } |T|<n-l\leq |T\cup S_0|, \\
	\text{inequality \eqref{eq:ineq7} with $\Sprime =T$ } & \text{if }|T\cup S_0|<n-l\leq |T\cup S_2|, \\
	\text{inequality \eqref{eq:ineq9} with  $\Sprime =T,~ i=1$ } & \text{if } |T\cup S_2|<n-l\leq |T\cup S_1|, \\
	\text{inequality \eqref{eq:ineq10} with $\Sprime =T$ } & \text{if } |T\cup S_1|<n-l\leq|T\cup S_3|, \\
	\text{inequality \eqref{redundant2l} with $\Sprime =T$ } & \text{if } |T\cup S_3|<n-l.
	\end{cases}$$
	{\bf Case 2:} $t_0=0$, $t_1+t_2=1$. Since $t_1\leq t_2$, in this case we have $t_1=0$ and $t_2=1$. Therefore, $S_2\setminus S_0\subseteq T$. Depending on the cardinality, based on \eqref{eq:type2l}, inequality $\alpha^Tz+\beta^T\delta\leq\gamma$ becomes$$\begin{cases}  
	\text{inequality \eqref{redundant1l} with $\Sprime =T$ } & \text{if } n-l+1\leq |T|, \\
	\text{inequality \eqref{eq:ineq6} with $\Sprime =T$ } & \text{if } |T|\leq n-l<|T\cup S_0|, \\
	\text{inequality \eqref{eq:ineq8} with $\Sprime =T,~ (i,k)=(2,1)$ } & \text{if }|T\cup S_0|<n-l\leq |T\cup S_1|, \\
	\text{inequality \eqref{redundant3l} with $\Sprime =T,~ i=2$ } & \text{if } |T\cup S_1|<n-l.
	\end{cases}$$
	{\bf Case 3:} $t_0=0$, $t_1=t_2=1$. In this case, $S_3\setminus S_0\subseteq T$. Depending on the cardinality, based on \eqref{eq:type2l}, inequality $\alpha^Tz+\beta^T\delta\leq\gamma$ becomes$$\begin{cases}  
	\text{inequality \eqref{redundant1l} with $\Sprime =T$ } & \text{if } n-l+1\leq |T|, \\
	\text{inequality \eqref{eq:ineq6} with $\Sprime =T$ } & \text{if } |T|\leq n-l\text{ and }|T\cup S_0|\geq n-l+2, \\
	\text{inequality \eqref{redundant4l} with $\Sprime =T$ } & \text{if }|T\cup S_0|=n-l+1, \\
	\text{inequality \eqref{redundant5l} with $\Sprime =T$ } & \text{if } |T\cup S_0|\leq n-l.
	\end{cases}$$
	
	Next we consider the case when $S_0\subseteq T$. If $|T|=n-l+1$ and $S_3\nsubseteq T$, or if $|T|\geq n-l+2$, then by \eqref{eq:type2l} we have $\gamma=\gamma-\beta_0=n-|T|-l,\gamma-\beta_0-\beta_1=n-|S_1\cup T|-l,\gamma-\beta_0-\beta_2=n-|S_2\cup T|-l,\gamma-\beta_0-\beta_1-\beta_2-\beta_3=n-|S_3\cup T|-l$. In this case, inequality $\alpha^Tz+\beta^T\delta\leq\gamma$ becomes inequality \eqref{redundant1l} with $\Sprime =T$.  If $|T|=n-l+1$ and $S_3\subseteq T$, then inequality $\alpha^Tz+\beta^T\delta\leq\gamma$ becomes inequality \eqref{redundant6l} with $\Sprime =T$.
\if0	
	Then inequalities \eqref{zjdile1}, \eqref{zjle1} and \eqref{cardconl} imply that\begin{align*}
	\alpha^Tz+\beta^T\delta=&-\sum_{j\in T}z_j+|S_1\setminus T|\delta_1+|S_2\setminus T|\delta_2\\
	=&\sum_{j\in J\setminus(T\cup S_3)}\underbrace{z_j}_{\leq 1}-\underbrace{\sum_{j\in J}z_j}_{\geq l}+\sum_{j\in S_1\setminus T}\underbrace{(z_j+\delta_1)}_{\leq 1}+\sum_{j\in S_2\setminus T}\underbrace{(z_j+\delta_2)}_{\leq 1}\\
	\leq&~n-|T\cup S_3|-l+|S_1\setminus T|+|S_2\setminus T|=|J\setminus T|-l=\gamma.
	\end{align*}
	If $|T|=n-l+1$ and $S_3\subseteq T$, then\begin{displaymath}
	-\sum_{j\in T}z_j+\delta_0-\delta_1-\delta_2+\delta_3=\alpha^Tz+\beta^T\delta\leq\gamma=-1
	\end{displaymath}
	is implied by \eqref{zjdige1}, \eqref{2link} and \eqref{eq:ineq10} with $\Sprime =T\setminus S_3$ as\begin{displaymath}
	\alpha^Tz+\beta^T\delta=-\sum_{j\in T}z_j+\delta_0-\delta_1-\delta_2+\delta_3=\underbrace{(-\sum_{j\in T\setminus S_3}z_j+\delta_3)}_{\leq 0}+\underbrace{(-\delta_0-\sum_{j\in S_0}z_j)}_{\leq -1}+\sum_{i=1}^2\underbrace{(\delta_0-\delta_i-\sum_{j\in S_i\setminus S_0}z_j)}_{\leq 0}\leq -1=\gamma.
	\end{displaymath}
\fi
	Otherwise, $|T|\leq n-l$.
	Note that $(T,\beta,\gamma)$ satisfies equations in \eqref{eq:type2l}. By \eqref{eq:type2l} and Theorem \ref{thm:facet}, when $|T|\leq n-l$, the following inequality is a valid type-2 inequality\begin{equation}\label{type2:l}
	-\sum_{j\in T\setminus S_0}z_j+(\beta+1)\delta_0+\beta_1\delta_1+\beta_2\delta_2+\beta_3\delta_3\leq 0=\gamma+1.
	\end{equation}
	Since $S_0\nsubseteq T\setminus S_0$, \eqref{type2:l} is implied by \eqref{zjdile1}-\eqref{cardconl} and \eqref{eq:ineq6}-\eqref{eq:ineq10} according to our discussion of cases when $t_0=0$. Inequality \eqref{type2:l} together with \eqref{zjdige1} implies $\alpha^Tz+\beta^T\delta\leq\gamma$. We thus conclude $\alpha^Tz+\beta^T\delta\leq\gamma$ is implied by \eqref{cardconu}-\eqref{zjge0} and \eqref{eq:ineq1}-\eqref{eq:ineq5}.
\end{proof}

Combining Theorem \ref{thm:facet}, Proposition \ref{prop:affine_hull}, Lemmas \ref{lem:simplex}, \ref{lem:type2ge0}, and \ref{lem:type2le0}, we obtain a complete description of $\conv(\bXJDd)$.

\begin{thm}\label{thm:1}
	If $S_0\neq\emptyset$ and $|S_3|\leq n-l$, then $\conv(\bXJDd)$ is given by inequalities \eqref{d0le1}-\eqref{eq:ineq10}.
\end{thm}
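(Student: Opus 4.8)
The plan is to assemble the pieces already established into the standard polyhedral argument that a valid inequality system describes a polytope precisely when it implies every facet-defining inequality together with the equalities cutting out the affine hull. First I would confirm that the hypotheses place us in the proper regime: since $S_0\neq\emptyset$ and $|S_3|\leq n-l$, equation \eqref{set:bDelta} gives $\bDJDd=\Delta$, a set of $|\bcS|+1=5$ affinely independent points, so $\bcS$ is a {\goodfamily} family and Theorem~\ref{thm:facet} is available. Theorem~\ref{thm:facet} then supplies the backbone: every facet of $\conv(\bXJDd)$ is defined by a Type-1 or a Type-2 inequality, and every inequality meeting the Type-2 condition is automatically valid.

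I would next dispatch validity of the whole list \eqref{d0le1}--\eqref{eq:ineq10}. The $\delta$-only inequalities \eqref{d0le1}--\eqref{dineq} are valid because, by Lemma~\ref{lem:simplex}, they describe $\conv(\Delta)=\conv(\bDJDd)$ and hence hold on all of $\bXJDd$; the standard linearization inequalities \eqref{cardconu}--\eqref{cardconl} are valid by direct inspection; and the two Type-2 families \eqref{eq:ineq1}--\eqref{eq:ineq5} and \eqref{eq:ineq6}--\eqref{eq:ineq10} are valid by Propositions~\ref{prop:ineqs1} and~\ref{prop:ineqs2}. Thus every listed inequality holds on $\bXJDd$, so $\conv(\bXJDd)$ is contained in the polyhedron they define.

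The reverse containment is where the real content sits, and I would obtain it by covering all facets through the Theorem~\ref{thm:facet} dichotomy. The Type-1 facets are exactly the facets of the simplex $\conv(\Delta)$, which Lemma~\ref{lem:simplex} identifies as \eqref{d0le1}--\eqref{dineq}, so they appear verbatim in the system. For a Type-2 facet $\alpha^Tz+\beta^T\delta\leq\gamma$, Theorem~\ref{thm:facet} forces $\alpha\in\{0,\kappa\}^n$ for some $\kappa\in\{+1,-1\}$. In the case $\kappa=+1$, Lemma~\ref{lem:type2ge0} shows the inequality is implied by \eqref{cardconu}--\eqref{zjge0} and \eqref{eq:ineq1}--\eqref{eq:ineq5}; in the case $\kappa=-1$, Lemma~\ref{lem:type2le0} shows it is implied by \eqref{zjdile1}--\eqref{cardconl} and \eqref{eq:ineq6}--\eqref{eq:ineq10}. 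Both of these subsystems are part of the list, so every Type-2 facet is implied, and together with the Type-1 facets this exhausts the facet-defining inequalities.

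The last thing to verify is the affine hull, and I anticipate this bookkeeping to be the only delicate point. By Proposition~\ref{prop:affine_hull}, if $|S_0|\geq2$ then $\conv(\bXJDd)$ is full-dimensional and the facet coverage above is conclusive. If instead $|S_0|=1$, the polytope lives in the hyperplane $z_{j_0}+\delta_0=1$; Proposition~\ref{prop:affine_hull} records that this equality is implied by \eqref{zjdile1} and \eqref{zjdige1}, both in our system, so the system pins down the correct affine hull. The care required here is to read ``facet-defining'' in Theorem~\ref{thm:facet} and in Lemmas~\ref{lem:type2ge0}--\ref{lem:type2le0} relative to this possibly lower-dimensional affine hull, ensuring no facet is inadvertently dropped when $|S_0|=1$; once this is checked, validity and full facet coverage give $\conv(\bXJDd)$ exactly, as claimed.
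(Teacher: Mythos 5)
Your proposal is correct and follows essentially the same route as the paper: invoke properness of $\bcS$ to apply Theorem~\ref{thm:facet}, handle the affine hull via Proposition~\ref{prop:affine_hull}, cover the type-1 facets by Lemma~\ref{lem:simplex}, and cover the type-2 facets by Lemmas~\ref{lem:type2ge0} and~\ref{lem:type2le0}. The only differences are presentational: you spell out the validity direction and the $|S_0|=1$ affine-hull bookkeeping explicitly, which the paper leaves implicit since those facts were established earlier.
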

\begin{proof}
	Since $\bcS$ is a proper family, by Theorem \ref{thm:facet}, a complete inequality description of $\conv(\bXJDd)$ consists of equalities that describe the affine hull of $\bXJDd$, and type-1 and type-2 inequalities of $\conv(\bXJDd)$. By Proposition \ref{prop:affine_hull}, the affine hull of $\bXJDd$ is either equal to $\bR^{m+n}$ or described by inequality \eqref{zjdile1} and \eqref{zjdige1}. By Lemma \ref{lem:simplex}, all type-1 are given by inequalities \eqref{d0le1}-\eqref{dineq}. By Lemmas \ref{lem:type2ge0} and \ref{lem:type2le0}, all type-2 inequalities are implied by inequalities \eqref{cardconu}-\eqref{eq:ineq10}.
\end{proof}
\begin{rmk}\label{rmk:1}
	We note that Theorem \ref{thm:1} still holds even when $l=u$. It follows from the fact that we can reduce the $l=u$ case to the $l<u$ case. Since $|J\setminus S_3|=n-|S_3|\geq l=u\geq 2$ when $l=u$, we can assume without loss of generality that $n\notin S_3$. Then the problem can be reduced to the case when $l=u-1$ by projecting out $z_n$ using $z_n=u-\sum_{j=1}^{n-1}z_j$.
\end{rmk}

\section{Separating valid inequalities}\label{sec:separation}
Before we present the polyhedral description of $\conv(\bXJDd)$ for the general case, we note that the separation problem for inequalities \eqref{eq:ineq1}-\eqref{eq:ineq10}  can be solved efficiently.
We will use this observation in the proof of our main result in Section \ref{sec:general}  as well.

Consider a point  $(\hat{\delta},\hat{z})$ that satisfies inequalities \eqref{d0le1}-\eqref{cardconl}, and notice that	
the left-hand side of any inequality of the form \eqref{eq:ineq1}-\eqref{eq:ineq10} is a function of $Q$ that can be written as a constant plus a sum of some additive scores $\pi_j$ for $j\in Q$. Therefore, a most violated inequality can be found  by  choosing the set $Q$ greedily. 	
For example, inequality \eqref{eq:ineq1} can be written as\begin{displaymath}	
u\delta_0+\sum_{j\in Q\cap S_0}z_j+\sum_{i=1}^2\sum_{j\in Q\cap(S_i\setminus S_0)}(z_j-\delta_0+\delta_i)+\sum_{j\in Q\setminus S_3}(z_j-\delta_0)\leq u,	
\end{displaymath}	
and therefore, for a given point  $(\hat{\delta},\hat{z})$ it is possible to define scores  $\pi_j$ for $j\in J$ as follows: \begin{displaymath}	
\pi_j=\begin{cases}	
\hat{z}_j & \text{if }j\in S_0,\\	
\hat{z}_j-\hat{\delta}_0+\hat{\delta}_i & \text{if }j\in (S_i\setminus S_0), i\in\{1,2\},\\	
\hat{z}_j-\hat{\delta}_0 & \text{if }j\in J\setminus S_3.	
\end{cases}	
\end{displaymath}	
The left-hand side of \eqref{eq:ineq1} then becomes $u\delta_0+\sum_{j\in Q}\pi_j$ and it is maximized by choosing	
$$Q\in\arg\max_{Q\subseteq J}\Big\{\sum_{j\in Q}\pi_j:|Q\setminus S_0|\leq u \Big\},$$	
which   can be done greedily  by picking $j\in J$ with the largest positive values $\pi_j$ while satisfying $|Q\setminus S_0|\leq u$.	
The remaining inequalities \eqref{eq:ineq2}-\eqref{eq:ineq10} can be written similarly and in each case a most violated inequality (if there is one) can be identified by first sorting the scores in non-increasing order and then picking the indices $j\in J$ greedily while the cardinality constraint is satisfied. 	
This leads to a polynomial-time algorithm for finding the most violated inequality for each one of  \eqref{eq:ineq1}-\eqref{eq:ineq10}.

\begin{table}[bt!]
	\begin{center}
		\begin{tabular}{ @{\extracolsep{\fill}} crccc}
			\toprule
			Inequality & \multicolumn{4}{c}{Scores $\pi_j$ for  $j\in J$}\\
			\midrule\midrule\addlinespace
			& $j\in S_0$ & ~~~$j\in S_1\setminus S_0$~~~ & ~~~$j\in S_2\setminus S_0$ ~~~&~~~ $j\in J\setminus S_3$~~~\\
			\cmidrule(lr){2-5}\addlinespace
			\eqref{eq:ineq1} & $\hat{z}_j$ & $\hat{z}_j-\hat{\delta}_0+\hat{\delta}_1$ & $\hat{z}_j-\hat{\delta}_0+\hat{\delta}_2$ & $\hat{z}_j-\hat{\delta}_0$\\
			\eqref{eq:ineq2} & $\hat{z}_j$ & $\hat{z}_j-\hat{\delta}_2+\hat{\delta}_3$ & $\hat{z}_j-\hat{\delta}_1+\hat{\delta}_3$ & $\hat{z}_j-\hat{\delta}_1-\hat{\delta}_2+\hat{\delta}_3$\\
			\eqref{eq:ineq3} with $(i,k)=(1,2)$ & $\hat{z}_j$ & $\hat{z}_j-\hat{\delta}_2$ & $\hat{z}_j$ & $\hat{z}_j-\hat{\delta}_2$\\
			\eqref{eq:ineq3} with $(i,k)=(2,1)$ & $\hat{z}_j$ & $\hat{z}_j$ & $\hat{z}_j-\hat{\delta}_1$ & $\hat{z}_j-\hat{\delta}_1$\\
			\eqref{eq:ineq4} with $i=1$ & $\hat{z}_j$ & $\hat{z}_j$ & $\hat{z}_j-\hat{\delta}_1+\hat{\delta}_3$ & $\hat{z}_j-\hat{\delta}_1$\\
			\eqref{eq:ineq4} with $i=2$ & $\hat{z}_j$ & $\hat{z}_j-\hat{\delta}_2+\hat{\delta}_3$ & $\hat{z}_j$ & $\hat{z}_j-\hat{\delta}_2$\\
			\eqref{eq:ineq5} & $\hat{z}_j$ & $\hat{z}_j$ & $\hat{z}_j$ & $\hat{z}_j-\hat{\delta}_3$\\
			\eqref{eq:ineq6} & $-\hat{z}_j$ & $-\hat{z}_j+\hat{\delta}_0-\hat{\delta}_1$ & $-\hat{z}_j+\hat{\delta}_0-\hat{\delta}_2$ & $-\hat{z}_j+\hat{\delta}_0$\\
			\eqref{eq:ineq7} & $-\hat{z}_j$ & $-\hat{z}_j+\hat{\delta}_2-\hat{\delta}_3$ & $-\hat{z}_j+\hat{\delta}_1-\hat{\delta}_3$ & $-\hat{z}_j+\hat{\delta}_1+\hat{\delta}_2-\hat{\delta}_3$\\
			\eqref{eq:ineq8} with $(i,k)=(1,2)$ & $-\hat{z}_j$ & $-\hat{z}_j+\hat{\delta}_2$ & $-\hat{z}_j$ & $-\hat{z}_j+\hat{\delta}_2$\\
			\eqref{eq:ineq8} with $(i,k)=(2,1)$ & $-\hat{z}_j$ & $-\hat{z}_j$ & $-\hat{z}_j+\hat{\delta}_1$ & $-\hat{z}_j+\hat{\delta}_1$\\
			\eqref{eq:ineq9} with $i=1$ & $-\hat{z}_j$ & $-\hat{z}_j$ & $-\hat{z}_j+\hat{\delta}_1-\hat{\delta}_3$ & $-\hat{z}_j+\hat{\delta}_1$\\
			\eqref{eq:ineq9} with $i=2$ & $-\hat{z}_j$ & $-\hat{z}_j+\hat{\delta}_2-\hat{\delta}_3$ & $-\hat{z}_j$ & $-\hat{z}_j+\hat{\delta}_2$\\
			\eqref{eq:ineq10} & $-\hat{z}_j$ & $-\hat{z}_j$ & $-\hat{z}_j$ & $-\hat{z}_j+\hat{\delta}_3$\\
			\bottomrule
		\end{tabular}
	\end{center}
	\caption{{Scores $\pi_j$ for inequalities \eqref{eq:ineq1}-\eqref{eq:ineq10}} given solution $(\hat{\delta},\hat{z})$}\label{table:score}
\end{table}

For each one of \eqref{eq:ineq1}-\eqref{eq:ineq10}, scores $\pi_j$ for $j\in J$ are summarized in Table \ref{table:score}. An important fact that will be used in proving the main result is that if $(\hat{\delta},\hat{z})$ satisfies $0\leq\hat{\delta}_3\leq\hat{\delta}_i\leq\hat{\delta}_0\leq 1$ for $i\in\{1,2\}$, then for each one of \eqref{eq:ineq1}-\eqref{eq:ineq5}, the score $\pi_j$ is nonpositive for each $j\in J$ satisfying $\hat{z}_j=0$, and for each one of \eqref{eq:ineq6}-\eqref{eq:ineq10}, the score $\pi_j$ is nonnegative for each $j\in J$ satisfying $\hat{z}_j=0$.

\section{General  case}\label{sec:general}
We now consider  the general case when the conditions $S_1\cap S_2\not=\emptyset$ and $|S_1\cup S_2|\le n-l$ do not necessarily hold. 
Given sets $S_1$ and $ S_2$, if either of these conditions is violated, we would define a new instance that satisfies the conditions. We then show that the original convex hull is a face of the convex hull of the modified instance.

We next  present the main result of this paper.

\begin{thm}\label{thm:general}
The convex hull of $\bXJDd$ is given by inequalities \eqref{d0le1}-\eqref{eq:ineq10}.
\end{thm}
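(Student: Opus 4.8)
The plan is to reduce the general case to the already-proven special case (Theorem \ref{thm:1}), where $S_0\neq\emptyset$ and $|S_3|\leq n-l$ both hold. The obstruction to applying Theorem \ref{thm:1} directly is that when $S_1\cap S_2=\emptyset$ we have $\delta_0\equiv 1$ (so $(0,0,0,0)\notin\bDJDd$), and when $|S_1\cup S_2|>n-l$ we have $\delta_3\equiv 0$ (so $(1,1,1,1)\notin\bDJDd$); in either case $\bDJDd\subsetneq\Delta$, so $\bcS$ fails to be a proper family and the characterization of Theorem \ref{thm:facet} does not apply. First I would handle each violated condition by constructing an enlarged instance on a ground set $J'\supseteq J$ together with modified sets $S_1',S_2'$ chosen so that $S_0'=S_1'\cap S_2'\neq\emptyset$ and $|S_3'|=|S_1'\cup S_2'|\leq n'-l'$, where $n'=|J'|$ and $l',u'$ are the adjusted cardinality bounds. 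Concretely, if $S_1\cap S_2=\emptyset$ I would add a common dummy element to both sets (and to $J$, adjusting $u,l$ accordingly) so that $S_0'$ becomes nonempty; if $|S_1\cup S_2|>n-l$ I would add dummy elements to $J$ (increasing $n$ relative to $l$) to restore $|S_3|\leq n-l$.

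The heart of the argument is then to show that $\conv(\bXJDd)$ for the original instance is a \emph{face} of $\conv(\bXJDd{}')$ for the enlarged instance, cut out by fixing the dummy variables to their forced values (e.g. $z_{\text{dummy}}=0$ on the face where $\delta_0=1$, or the analogous fixings that force $\delta_3=0$). Since Theorem \ref{thm:1} gives the complete inequality description of $\conv(\bXJDd{}')$ by \eqref{d0le1}--\eqref{eq:ineq10} in the enlarged instance, I would argue that restricting those inequalities to the face and projecting out the dummy variables yields exactly the inequalities \eqref{d0le1}--\eqref{eq:ineq10} for the original instance (possibly after recognizing that some become redundant or reduce to others). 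The key technical point is that a face of a polytope inherits its facet description from the facets of the full polytope together with the face-defining equalities, so no new inequalities are needed beyond \eqref{d0le1}--\eqref{eq:ineq10}.

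The step I expect to be the main obstacle is verifying that the face/projection operation sends the family \eqref{eq:ineq1}--\eqref{eq:ineq10} of the enlarged instance back onto the same family for the original instance, with the correct coefficients and right-hand sides, under the change in the parameters $n,l,u$ and the sets $S_i$. This requires a careful bookkeeping of how the coefficients $|\Sprime\setminus S_i|$, $|\Sprime\cap S_i\setminus S_0|$, $u-|\cdot|$, and $l+|\cdot|-n$ transform when a dummy element is added to or removed from the relevant sets, and checking that the cardinality-side conditions (such as $|\Sprime\setminus S_0|\leq u$) are preserved. Here the efficient-separation machinery of Section \ref{sec:separation}, in particular the sign observations on the scores $\pi_j$ for indices with $\hat z_j=0$, becomes useful: it lets me argue that an optimal choice of $Q$ in the enlarged instance never needs to include a dummy index with a forced-zero $z$-value, so the separating inequality one obtains is genuinely one of \eqref{eq:ineq1}--\eqref{eq:ineq10} in the original variables. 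I would organize the proof by first treating the case where exactly one condition is violated, then composing the two reductions to cover the case where both fail simultaneously.
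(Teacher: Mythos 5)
Your proposal follows essentially the same route as the paper's proof: augment $J$ with dummy elements (a shared element for $S_1,S_2$ when $S_0=\emptyset$, plus extra ground elements when $|S_3|>n-l$) so that Theorem \ref{thm:1} applies, identify $\conv(\bXJDd)$ as the face of the enlarged hull where the dummy $z$-variables vanish, and then use the score-sign observation from Section \ref{sec:separation} to show the enlarged family of inequalities restricts to the same family \eqref{eq:ineq1}--\eqref{eq:ineq10} on the original instance (the paper handles both degeneracies in a single construction rather than composing two reductions, and keeps $l,u$ unchanged, but these are organizational details). The argument is correct.
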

\begin{proof}
	Without loss of generality, assume $J=\{1,2,\ldots,n\}$. We augment the base set $J$ and sets $S_i$ for $i\in\{1,2\}$ as follows:\begin{align*}
	J^+=&~\big\{1,2,\ldots,\max(n,|S_3|+l)+k_0 \big\},\\[.2cm]
	S_i^+=&\left\{\begin{array}{ll}
	S_i & \text{if }S_1\cap S_2\neq\emptyset\\
	S_i\cup\{n+1\} & \text{otherwise}
	\end{array}
	\right.~~~~\text{ for }i\in\{1,2\},
	\end{align*}
	where $k_0$ denotes if $S_1\cap S_2=\emptyset$ or not, i.e.,\begin{displaymath}
	k_0=\left\{\begin{array}{ll}
	1 & \text{if }S_1\cap S_2=\emptyset,\\
	0 & \text{otherwise.}
	\end{array}
	\right.
	\end{displaymath}
	Let $\bXJDd_+$ denote the  CCMS associated with this new ground set  $J^+$ and family  $\bar{\cS}_+:=\{S^+_0,S^+_1,S^+_2,S^+_3\}$, where $S^+_0:=S^+_1\cap S^+_2$ and $S^+_3:=S^+_1\cup S^+_2$.	
	By Theorem \ref{thm:1}, the convex hull of $\bXJDd_+$ is given by \eqref{d0le1}-\eqref{eq:ineq10} with $S_i$ replaced by $S_i^+$, $J$ replaced by $J^+$ and $n$ replaced by $n^+:=|J^+|=\max(n,|S_3|+l)+k_0$.
	Notice that 
	$$ (\delta_0,\delta_1,\delta_2,\delta_3,\{z_j\}_{j\in J})\in \bXJDd ~\Longleftrightarrow~(\delta_0,\delta_1,\delta_2,\delta_3,\{z_j\}_{j\in J^+})\in \bXJDd_+ \text{ where } z_j=0\text{ for } j\in J^+\setminus J,$$
	as $\delta_i=\prod_{j\in S_i^+}(1-z_j)=\prod_{j\in S_i}(1-z_j)$ when $z_j=0$ for $j\in J^+\setminus J$. 
 Therefore,
	$$ \bXJDd \times \{0\}^{(n^+-n)}~=~ \bXJDd_+\cap\big\{    (\delta,z) \in\R^4\times \R^{n_+}\::\: z_j=0,~j\in J^+\setminus J\big\}$$
	and, consequently,
	\begin{equation}\label{conv:extended}
	\conv(\bXJDd) \times \{0\}^{(n^+-n)}~=~ \conv(\bXJDd_+)\cap\big\{    (\delta,z) \in\R^4\times \R^{n_+}\::\: z_j=0,~j\in J^+\setminus J\big\}.
	\end{equation}
	In other words, $\conv(\bXJDd) $ is a proper face of $\conv(\bXJDd_+)$ obtained by setting the additional variables $z_{n+1},\ldots, z_{n^+}$ to zero.
	This implies that $\conv(\bXJDd)$ can be described in the extended space by inequalities \eqref{d0le1}-\eqref{eq:ineq10} with $S_i$ replaced by $S_i^+$, $J$ replaced by $J^+$ and $n$ replaced by $n^+$ together with equations  $z_j=0$ for $j\in J^+\setminus J$. 
	Note that, 	since $z_j=0$ for $j\in J^+\setminus J$  in this description, inequalities \eqref{d0le1} and \eqref{zjdige1} imply that $\delta_0=1$ if $S_0=\emptyset$, and inequalities \eqref{d3ge0} and \eqref{eq:ineq10} with $\Sprime =\emptyset$ imply that $\delta_3=0$ if $|S_3|>n-l$.
	In addition, since $0\le\delta_3\leq\delta_i\leq\delta_0\le1$ for $i\in\{1,2\}$ by inequalities \eqref{d0le1}-\eqref{dineq},  any inequality  \eqref{eq:ineq1}-\eqref{eq:ineq5} given by a set $\Sprime\subseteq J^+$ is implied by the same inequality given by the set $\Sprime\cap J$. In particular, given $(\hat{\delta},\hat{z})$ satisfying $\hat{z}_j=0$ for $j\in J^+\setminus J$ and $0\le\hat{\delta}_3\leq\hat{\delta}_i\leq\hat{\delta}_0\le1$ for $i\in\{1,2\}$, the scores $\pi_j$ for all $j\in J^+\setminus J$ of each class of inequalities \eqref{eq:ineq1}-\eqref{eq:ineq5} are nonpositive. Therefore, inequalities \eqref{eq:ineq1}-\eqref{eq:ineq5} with $Q\cap (J^+\setminus J)\neq\emptyset$ can never be the most violated inequality provided that $\hat{z}_j=0$ for $j\in J^+\setminus J$ and $0\le\hat{\delta}_3\leq\hat{\delta}_i\leq\hat{\delta}_0\le1$ for $i\in\{1,2\}$ are satisfied, which implies that only inequalities \eqref{eq:ineq1}-\eqref{eq:ineq5} with $Q\subseteq J$ can be non-redundant for describing \eqref{conv:extended} given \eqref{d0le1}-\eqref{dineq}.
	Similarly, any inequality  \eqref{eq:ineq6}-\eqref{eq:ineq10} given by a set $\Sprime\subseteq J^+$ is implied by the same inequality given by $\Sprime\cup (J^+\setminus J)$.  In other words, only inequalities \eqref{eq:ineq6}-\eqref{eq:ineq10} with $Q\supseteq J^+\setminus J$ can be non-redundant for describing \eqref{conv:extended} given \eqref{d0le1}-\eqref{dineq}.
	 It follows that $\conv(\bXJDd)$ is characterized by inequalities \eqref{d0le1}-\eqref{eq:ineq10} in the original space.
\end{proof}
\begin{rmk}
	Similar to Remark \ref{rmk:1}, Theorem \ref{thm:general} holds even when $l=u$ as the proof does not depend on the assumption that $l<u$.
\end{rmk}

We note that when $S_0=\emptyset$, inequalities  \eqref{eq:ineq1} and \eqref{eq:ineq6} are redundant. 
In this case, setting $\delta_0=1$, inequality \eqref{eq:ineq1} becomes 
$$\sum_{j\in \Sprime }z_j+\sum_{i=1}^2(|\Sprime \cap S_i|)\delta_i\leq |\Sprime |,$$ 
which is implied by \eqref{zjdile1} and \eqref{zjle1}. 
Similarly, inequality \eqref{eq:ineq6} becomes 
$$-\sum_{j\in \Sprime }z_j+\sum_{i=1}^2(|S_i\setminus \Sprime |)\delta_i\leq n-l-|\Sprime |,$$ 
which is implied by \eqref{zjdile1}, \eqref{zjle1} and \eqref{cardconl}. 

On the other hand, when $|S_3|>n-l$, inequalities \eqref{eq:ineq4}, \eqref{eq:ineq5}, \eqref{eq:ineq9} and \eqref{eq:ineq10} become redundant. 
In this case, since $\delta_3=0$, inequalities \eqref{eq:ineq2} and $\delta_i\geq 0$ imply inequality \eqref{eq:ineq4}.
Similarly, inequalities \eqref{zjge0} and \eqref{cardconu} imply \eqref{eq:ineq5}, inequalities \eqref{eq:ineq7} and $\delta_i\geq 0$ imply \eqref{eq:ineq9}, and inequality \eqref{zjge0} implies \eqref{eq:ineq10}.

We should also note that inequalities \eqref{eq:ineq1}-\eqref{eq:ineq5} are redundant when $u=n$, and inequalities \eqref{eq:ineq6}-\eqref{eq:ineq10} are redundant when $l=0$. For example, given $(\hat{\delta},\hat{z})$ satisfying \eqref{d0le1}-\eqref{cardconl}, if $u=n$, then the left-hand side of \eqref{eq:ineq1} becomes $\sum_{j\in Q}\pi_j+n\hat{\delta}_0$. Note that for \eqref{eq:ineq1}, $\pi_j+\hat{\delta}_0\leq 1$ for each $j\in J$. Therefore, $\sum_{j\in Q}\pi_j+n\hat{\delta}_0\leq\sum_{j\in J}(\pi_j+\hat{\delta}_0)\leq n$, i.e., \eqref{eq:ineq1} can never be violated if $u=n$ and \eqref{d0le1}-\eqref{cardconl} are satisfied. Similar arguments can be made for each of \eqref{eq:ineq1}-\eqref{eq:ineq5} when $u=n$ and for each of \eqref{eq:ineq6}-\eqref{eq:ineq10} when $l=0$.

Observe that the validity of inequalities \eqref{d0le1}-\eqref{eq:ineq10} does not depend on the fact that $S_1\setminus S_2\neq\emptyset$ or $S_2\setminus S_1\neq\emptyset$. In fact, these inequalities describe the convex hull even when $\cS$ is nested. For example, when $S_1\subset S_2$, we have $\delta_0=\delta_1$ and $\delta_3=\delta_2$ by \eqref{d3ledi}, \eqref{dineq} and \eqref{2link}. Then inequalities \eqref{d0le1}-\eqref{eq:ineq10}, with $\delta_0$ replaced by $\delta_1$ and $\delta_3$ replaced by $\delta_2$, coincide with the inequality description of $\conv(\XJDd)$ for the nested case \cite{chen2020multilinear}.


\section{Conclusions}
We have characterized the convex hull of CCMS-2 using an extended formulation. The analysis significantly relies on the characterization of type-1 and type-2 inequalities for CCMSs associated with proper families. It is natural to ask if our results can be extended to describe the convex hull of CCMSs with fixed number of $\delta_i$ variables. By using Balas' disjunctive model \cite{balas1979disjunctive}, we can give an extended formulation of the convex hull of such CCMSs \cite{chen2020multilinear}. However, a polyhedral description for the set in the $(\delta,x)$ space is not clear yet for $|\cS|>2$.
\newpage
\bibliographystyle{plain}
\bibliography{ref}
\end{document}